\newtheorem{theorem}{Theorem}[section]
\newtheorem*{theorem*}{Theorem}
\newtheorem*{lemma*}{Lemma}
\newtheorem{corollary}{Corollary}[theorem]
\newtheorem{lemma}[theorem]{Lemma}
\theoremstyle{definition}
\newtheorem{prop}[theorem]{Proposition}
\newcommand{\ubar}[1]{\underaccent{\bar}{#1}}
\numberwithin{equation}{section}
\DeclareMathOperator{\Tr}{Tr}
\newcommand{\R}{\mathbb{R}}
\newcommand{\C}{\mathbb{C}}
\newcommand{\M}{\mathbb{M}}
\newcommand{\mba}{\mathbf{a}}
\newcommand{\mbb}{\mathbf{b}}
\newcommand{\Diag}{\text{Diag}}
\newcommand{\mbx}{\mathbf{x}}
\newcommand{\mby}{\mathbf{y}}
\newcommand{\sgn}{\text{sgn}}
\newcommand{\s}{\textcolor{white}{........}}
\title{Reverse H\"{o}lder, Minkowski, And Hanner Inequalities For Matrices}
\author{
\textbf{Victoria M~Chayes}\\
Department of Mathematics\\
Rutgers University\\
Piscataway, NJ 08854 \\
\texttt{vc362@math.rutgers.edu}
}
\begin{document}

\maketitle

\begin{abstract}
We examine a number of known inequalities for $L^p$ functions with reverse representations for $s<1$ with complex matrices under the $p$-norms $||X||_p=\Tr[(X^\ast X)^{p/2}]^{1/p}$, and similarly defined quasinorm or antinorm quantities  $||X||_s=\Tr[(X^\ast X)^{s/2}]^{1/s}$. Analogous to the reverse H\"{o}lder and reverse Minkowski for $L^p$ functions, it has recently been shown that for $A,B\in M_{n\times n}(\C)$ such that $|B|$ is invertible, $||AB||_1\geq ||A||_{s}||B||_{s/(s-1)}$ and for $A,B$ positive semidefinite that $||A+B||_s\geq ||A||_s+||B||_s$. We comment on variational representations of these inequalities. A third very important inequality is Hanner's inequality $||f+g||_p^p+||f-g||_p^p\geq(||f||_p+||g||_p)^p+|||f||_p-||g||_p|^p$ in the $1\leq p\leq 2$ range, with the inequality reversing for $p\geq 2$. The analogue inequality  has been proven to hold matrices in certain special cases. No reverse Hanner has established for functions or matrices considering ranges with $s<1$. We develop a reverse Hanner inequality for functions, and show that it holds for matrices under special conditions; it is sufficient but not necessary for $C+D, C-D\geq 0$. We also extend certain related singular value rearrangement inequalities that were previously known in the $1\leq p\leq3$ range to the $s<1$ range. Finally, we use the same techniques to characterize the previously unstudied equality case: we show that there is equality when $p\neq 1,2$ if and only if $|D|=k|C|$, which is directly analogous to the $L^p$ equality condition.

\keywords{Hanner's Inequality \and Reverse Holder Inequality \and Reverse Minkowski Inequality  \and p-Schatten Norm \and Uniform Convexity \and Matrix Inequality \and Majorization}
\end{abstract}

\section{Introduction}
\label{intro}
\s

It is of great interest to generalize equalities known for $L^p$ functions to complex matrices. The H\"{o}lder and Minkowski inequalities are well-known inequalities that are fundamental to the study of $L^p$ spaces. The $p$-Schatten norm  $||X||_p=\Tr[(X^\ast X)^{p/2}]^{1/p}$ is known to also satisfy these inequalities when $p\geq 1$.  Using the technique of majorization, \cite{BOURIN201422} first established a reverse Minkowski inequality, \begin{equation}\label{reversem}
||A+B||_s\geq ||A||_s+||B||_s
\end{equation}
for $A,B>0$ and $s<1$, and in \cite{BOURIN201422} \cite{SHI2021257} a reverse H\"{o}lder Inequality \begin{equation}\label{reverseh}
||AB||_1\geq ||A||_s||B||_r
\end{equation}
with $B$ invertible, $0<s<1$, and $r=\frac{s}{s-1}$. This comes from the more general inequality \begin{equation}
||| |AB|^r |||^{1/r}\geq ||| |A|^p |||^{1/p}||| |B|^q |||^{1/q}
\end{equation}
for $r , p > 0$ , $q < 0$, and $\frac{1}{r}=\frac{1}{p}+\frac{1}{q}$, where $|||\cdot|||$ is any unitarily invariant norm. 

\s The p-norm notably has the dual representation \begin{equation} 
||A||_p=\sup_{B \colon ||B||_q=1} |\Tr[AB^\ast]|.
\end{equation}
where $p>1$, $\frac{1}{p}+\frac{1}{q}=1$.

We establish using Holder's inequality a dual representation of the quasinorm $(0<s'<1)$ and antinorm $(0<s')$  \begin{equation}
	||A||_s=\inf_{B\colon ||B||_r=1} ||AB^\ast||_1=\inf_{B\colon ||B||_r=1} \Tr[|AB^\ast|]
\end{equation}
where here $s<1$, and $r=\frac{s}{s-1}$. We then show how this can be used to independently establish Equation (\ref{reversem}), as well as other inequalities that can be derived from variational representations both for H\"{o}lder and reverse H\"{o}lder inequalities.

\s It has also been of great interest to extend Hanner's Inequality for $L^p$ spaces \begin{equation}
||f+g||_p^p+||f-g||_p^p\geq (||f||_p+||g||_p)^p+|||f||_p-||g||_p|^p
\end{equation}
for $1\leq p\leq 2$ to the non-communative analogue in $C^p$ \begin{equation}\label{h2}
||X+Y||_p^p+||X-Y||_p^p\geq (||X||_p+||Y||_p)^p+|||X||_p-||Y||_p|^p.
\end{equation}

\s Hanner's inequality was originally proven as a simpler manner of proving uniform convexity of $L^p$ spaces, and in fact establishes the exact modulus of smoothness and convexity. The generalization to $C^p$ convexity was first addressed in \cite{Tomczak1974}, and the optimal coefficients of 2-uniform smoothness convexity were determined in \cite{Ball1994}. However, the question of a general Hanner's Inequality holding remained open, and has only been proven in the following special cases:

\begin{enumerate}
\item In \cite{mccarthy1967} (McCarthy, 1967): for all $X,Y\in M_{n\times n}(\C)$ such that $||X||_p=||Y||_p$, and all ranges of $p$. Note the proof for $1\leq p\leq 2$ given in \cite{mccarthy1967} is incorrect and corrected in \cite{hirzallah2002non}.
\item In \cite{Tomczak1974} (Tomczak-Jaegermann 1974): for all $X,Y\in \M_{n\times n}(\C)$ and $p=2k$.
\item In \cite{Ball1994} (Ball, Carlen, Lieb 1994): for all $X,Y\in \M_{n\times n}(\C)$ and $1\leq p\leq 4/3$, and $p\geq 4$; and for $X+Y, X-Y\geq 0$ and all ranges of $p$.
\item In \cite{Chayes2020} (Chayes, 2020): for all self-adjoint $X,Y\in \M_{n\times n}(\C)$ such that the anticommutator $\{X,Y \}=XY+YX=0$, and all ranges of $p$.
\end{enumerate}

\s Hanner's inequality can be seen as a correction term for the Minkowski inequality. Therefore, after examining the reverse Minkowski inequality, it is natural to ask if a Hanner-like inequality can also be established for $s<1$. We prove the following new inequalities for this range:

\begin{theorem}\label{RHFV}
Let $\mbx,\mby\in \R^n$. Then \begin{equation}
||\mbx+\mby||_s^s+||\mbx-\mby||_s^s	\leq (||\mbx||_s+||\mby||_s)^s+\big|||\mbx||_s-||\mby||_s\big|^s,
\end{equation}
for $0<s<1$, with the inequality reversing when $s<0$.
\end{theorem}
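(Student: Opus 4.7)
The plan is to mimic Hanner's parametric approach to the classical $L^p$ inequality while reversing the direction of the key extremization, since $u\mapsto u^s$ is concave on $[0,\infty)$ for $0<s<1$ and convex on $(0,\infty)$ for $s<0$. Both sides of the inequality are homogeneous of degree $s$ and symmetric under $\mbx\leftrightarrow\mby$, so WLOG I would normalize $||\mbx||_s\ge||\mby||_s>0$ and set $\tau=||\mby||_s/||\mbx||_s\in(0,1]$.

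The analytic engine is the Hanner parametrization. For $t\in(0,1]$, introduce
\begin{equation*}
\alpha_s(t)=(1+t)^{s-1}+(1-t)^{s-1},\qquad \beta_s(t)=\bigl[(1+t)^{s-1}-(1-t)^{s-1}\bigr]t^{1-s}.
\end{equation*}
A direct calculation gives the algebraic identity $\alpha_s(r/R)R^s+\beta_s(r/R)r^s=(R+r)^s+(R-r)^s$ for all $R\ge r\ge 0$. The crucial point is that for $0<s<1$ this common value is the \emph{infimum} of $t\mapsto \alpha_s(t)R^s+\beta_s(t)r^s$ over $(0,1]$, attained at $t^\ast=r/R$, whereas in the classical Hanner range $p\in[1,2]$ it would be the supremum. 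This sup/inf flip is the concavity of $u\mapsto u^s$ in disguise and is what ultimately reverses the inequality.

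To lift to the vector setting, I would use the scalar identity $|a+b|^s+|a-b|^s=(|a|+|b|)^s+\bigl||a|-|b|\bigr|^s$ (valid for real $a,b$ since the unordered pairs $\{|a+b|,|a-b|\}$ and $\{|a|+|b|,\bigl||a|-|b|\bigr|\}$ coincide), so in one dimension the inequality is an equality and the real content lies in the aggregation across coordinates. Summing a coordinate-wise two-point estimate over $i$ and plugging in $t=\tau$ would then compare $||\mbx+\mby||_s^s+||\mbx-\mby||_s^s$ against $\alpha_s(\tau)||\mbx||_s^s+\beta_s(\tau)||\mby||_s^s$, and the identity of the previous paragraph collapses the latter to exactly $(||\mbx||_s+||\mby||_s)^s+\bigl|||\mbx||_s-||\mby||_s\bigr|^s$. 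The $s<0$ case follows the same outline: the convexity of $u\mapsto u^s$ on $(0,\infty)$ swaps the role of $t^\ast$ back to a supremum and reverses the direction throughout.

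The main obstacle is the two-point estimate at the \emph{global} parameter $\tau=||\mby||_s/||\mbx||_s$. Unlike the $p\ge 1$ situation, where $\alpha_p(t)|a|^p+\beta_p(t)|b|^p\le|a+b|^p+|a-b|^p$ holds unconditionally on $(a,b,t)$, in the range $0<s<1$ one has $\beta_s(t)<0$, and no single $t$ can give a pointwise bound covering all coordinate pairs $(|x_i|,|y_i|)$ with mismatched ratios. I expect to overcome this by a Jensen/integral-representation argument---writing $u^s=\tfrac{\sin(\pi s)}{\pi}\int_0^\infty\tfrac{u\,\lambda^{s-1}}{u+\lambda}\,d\lambda$ to linearize the $s$-th power inside each quasinorm---so that the extremization over $t$ can be performed after aggregation, sidestepping the pointwise obstruction at the cost of a more intricate dual-variable analysis in $(t,\lambda)$.
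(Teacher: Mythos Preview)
You have correctly located the obstruction: for $0<s<1$ one has $\beta_s(t)<0$, so the would-be pointwise bound $(a+b)^s+|a-b|^s\le\alpha_s(t)a^s+\beta_s(t)b^s$ fails already at $a=0$, while the opposite direction fails because $\alpha_s(t)\to\infty$ as $t\to1^-$. Thus no single global parameter $t=\tau$ can control all coordinates, and Hanner's summation trick does not transplant. The problem is that your proposed remedy is not a proof but a hope. Writing $u^s=\frac{\sin(\pi s)}{\pi}\int_0^\infty \frac{u}{u+\lambda}\lambda^{s-1}\,d\lambda$ does linearize the power, but you give no indication of what inequality in $(t,\lambda)$ is supposed to replace the failed two-point lemma, nor why deferring the extremization in $t$ until after aggregation becomes legitimate once an extra integration variable is present; the difficulty is precisely that the optimal $t$ depends on the coordinate, and nothing in the outline addresses that. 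The same issue recurs for $s<0$ (take $|x_i|\to0^+$ with $|y_i|$ fixed to see the pointwise bound blow up), so your one-line claim that ``the $s<0$ case follows the same outline'' inherits the same gap.

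The paper abandons the parametric lemma and instead compares Taylor expansions in an auxiliary variable $r$. Setting $F(r)=\sum_i|x_i+ry_i|^s+|x_i-ry_i|^s$, one checks that $F$ is even, that every even derivative keeps a fixed sign on $(0,1)$, and that
\[
F^{(2k)}(0)=2(s)_{2k}\sum_{i=1}^n y_i^{2k}|x_i|^{s-2k}.
\]
The \emph{reverse} H\"older inequality with exponents $s/(2k)$ and $s/(s-2k)$ gives $\sum_i y_i^{2k}|x_i|^{s-2k}\ge ||\mby||_s^{2k}||\mbx||_s^{s-2k}$; multiplying by the negative factor $(s)_{2k}$ yields $F^{(2k)}(0)\le 2(s)_{2k}||\mby||_s^{2k}||\mbx||_s^{s-2k}$, which is exactly the $2k$-th Taylor coefficient of $( ||\mbx||_s+r||\mby||_s)^s+( ||\mbx||_s-r||\mby||_s)^s$. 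A partial-sum/monotonicity argument then upgrades the coefficient-wise comparison to $F(1)\le S_{2k}(1)$ for every $k$, and $k\to\infty$ finishes; for $s<0$ the sign of $(s)_{2k}$ flips and so do all inequalities. The essential analytic input is therefore the scalar reverse H\"older inequality applied term by term in a power series, not a two-point parametric estimate---this is the idea your proposal is missing.
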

For $0<s<1$, this reverse inequality for vectors implies a reverse Hanner for $s$-integrable functions $f,g$ by applying dominated convergence to pointwise limits of simple functions approximating $f,g$ from below. For $s<0$, a reverse Hanner for general $s$-integrable functions can be shown with the original convexity argument used by Hanner \cite{hanner1956}. 

\s For the matrix case,  we establish \begin{theorem}\label{tHR}
Let $C+D, C-D\geq 0$. Then \begin{equation}\label{HR}
||C+D||_s^s+||C-D||_s^s.\leq	(||C||_s+||D||_s)^s+(||C||_s-||D||_s)^s
\end{equation}
for $0<s<1$, with the inequality reversing when $s<0$. There is equality only if $C$ and $D$ commute. However, the relationship of Equation (\ref{HR}) may not occur for general self-adjoint $X,Y$. In particular, if one does not have $||X\pm Y||_s^s\leq ||X+\pm Y_{\Diag}||_s^s$ for $0<s<1$ and reversing for $s<0$, or neither $||X_\Diag||_s\leq ||Y||_s$ nor $||Y_\Diag||_s\leq||X||_s$ holds, then the relationship may not hold.
\end{theorem}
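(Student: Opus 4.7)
The plan is to use the hypothesis $C\pm D \geq 0$ to reduce the matrix inequality to a commuting (vector) statement via an extension of the Ball--Carlen--Lieb singular value rearrangement inequalities to the range $s<1$, and then invoke Theorem \ref{RHFV}. First I would reframe the left-hand side using the block matrix $X = \left(\begin{smallmatrix} C & D \\ D & C\end{smallmatrix}\right)$, which is positive semidefinite precisely under the hypothesis since it is unitarily equivalent via the Hadamard transform to $(C+D)\oplus(C-D)$, giving the identity
\begin{equation*}
||C+D||_s^s + ||C-D||_s^s = \Tr[X^s].
\end{equation*}
This packages the two left-hand side terms into a single positive semidefinite object whose spectral structure can be analyzed jointly with the spectra of $C$ and $D$.

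Next I would extend the singular value rearrangement inequality underlying Ball--Carlen--Lieb's proof of the forward Hanner inequality (which they used in the $1\leq p\leq 3$ range and in the $C\pm D\geq 0$ case for all $p\geq 1$) to the regime $s<1$. In that range $t^s$ is operator concave on $[0,\infty)$ for $0<s<1$, and operator convex but decreasing on $(0,\infty)$ for $s<0$, so the direction of the inequality reverses relative to the $p\geq 1$ case. Concretely, I aim to produce an ordering $\sigma$ of the singular values $\mu_i(D)$ so that
\begin{equation*}
\sum_i \lambda_i(C+D)^s + \sum_i \lambda_i(C-D)^s \leq \sum_i\bigl[(\lambda_i(C) + \mu_{\sigma(i)}(D))^s + (\lambda_i(C) - \mu_{\sigma(i)}(D))^s\bigr],
\end{equation*}
with the inequality reversing for $s<0$; the hypothesis $C\pm D\geq 0$ ensures that $\mu_{\sigma(i)}(D)\leq \lambda_i(C)$, so each term on the right is nonnegative. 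The argument parallels the pointwise $4\times 4$ positive semidefinite reduction in Ball--Carlen--Lieb but with the opposite direction of monotonicity throughout.

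With the rearrangement in hand, the problem is reduced to a commuting vector inequality, and I would apply Theorem \ref{RHFV} to $\mbx = (\lambda_i(C))$ and $\mby = (\mu_{\sigma(i)}(D))$, using $||\mbx||_s = ||C||_s$ and $||\mby||_s = ||D||_s$, to close the chain. Equality throughout the chain forces equality in the singular value rearrangement (which forces $C$ and $D$ to share an eigenbasis and thus to commute) and, from the equality case of Theorem \ref{RHFV}, the additional parallelism condition $|D|=k|C|$ mentioned in the abstract. For the closing clause concerning general self-adjoint $X,Y$ where $X\pm Y$ is not positive semidefinite, the block matrix realization fails to be positive semidefinite and the rearrangement step collapses; I would produce explicit $2\times 2$ counterexamples in which either the pinching step $||X\pm Y||_s^s \leq ||X\pm Y_\Diag||_s^s$ fails, or neither of the size relations $||X_\Diag||_s \leq ||Y||_s$ and $||Y_\Diag||_s \leq ||X||_s$ holds, and verify numerically that the claimed reverse Hanner inequality is violated in those regimes.

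The main obstacle is the second step, the singular value rearrangement inequality for $s<1$. The original Ball--Carlen--Lieb reduction uses operator convexity of $t^p$ and a careful analysis of a $4\times 4$ test matrix; here the role of convexity is replaced by operator concavity of $t^s$ for $0<s<1$, and for $s<0$ one must also handle the subtlety that $t^s$ has a singularity at the origin (requiring invertibility of $C\pm D$ implicit in the setup). Tracking the equality case through the extended rearrangement and through the concluding application of Theorem \ref{RHFV} is an additional technical burden that must be handled carefully to isolate the commutativity condition as the sole source of equality for $p\neq 1,2$.
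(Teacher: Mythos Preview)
Your strategy differs from the paper's and has a genuine gap. The paper does not route the main inequality through a singular value rearrangement. Instead it expands both $\Tr[(C+rD)^s+(C-rD)^s]$ and $(||C||_s+r||D||_s)^s+(||C||_s-r||D||_s)^s$ as Taylor series in $r$, using the integral representation $X^s=c_s\int_0^\infty(t^{-1}-(t+X)^{-1})t^s\,dt$, and compares even coefficients term by term. The key step is that, in the basis diagonalizing $D$, one has $\Tr[((C+t)^{-1}D)^{2k}]\geq \Tr[((C_{\Diag}+t)^{-1}D)^{2k}]$, which follows from joint convexity of the functional $\Psi(A,B,K)=\Tr[(B^{q/2}KA^pK^\ast B^{q/2})^s]$ due to Hiai; a reverse H\"older then matches each coefficient with the right-hand side. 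This works for any self-adjoint $D$ under $C\pm D\geq 0$. The paper does also give a short pinching/majorization argument of the flavor you sketch, but explicitly only for the subcase $D\geq 0$, and uses it chiefly to isolate the equality condition.

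The gap in your plan is that the rearrangement inequality you intend to use is only available under the stronger hypothesis $C\geq D\geq 0$; that is precisely Theorem~\ref{svr2} of the paper, and the paper states its hypotheses are necessary. The assumption $C\pm D\geq 0$ is strictly weaker: it does not force $D\geq 0$, nor even $C\geq |D|$ (take $C=\left(\begin{smallmatrix}2&3/2\\3/2&2\end{smallmatrix}\right)$, $D=\left(\begin{smallmatrix}1&0\\0&-1\end{smallmatrix}\right)$; then $C\pm D\geq 0$ but $C-|D|=C-I$ has a negative eigenvalue). Your reference to ``the $4\times4$ positive semidefinite reduction in Ball--Carlen--Lieb'' conflates two arguments: the BCL treatment of the $C\pm D\geq 0$ case did not use singular value rearrangement, and the later rearrangement inequalities (extended here to $s<1$ as Theorems~\ref{svr1}--\ref{svr2}) carry the extra ordering hypotheses. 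Thus your chain closes only in the $D\geq 0$ subcase, and a separate device---essentially the paper's Taylor/$\Psi$-convexity method---is still needed for the full statement. Your plan for the counterexamples via $2\times 2$ matrices does match the paper's.
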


\s Letting $\sigma_\uparrow(X)$ and $\sigma_\downarrow(X)$ denote the singular values of $X$ in ascending and descending order respectively, we can also examine the singular value rearrangement inequalities first studied in \cite{Carlen2006} as a potential method to extend the known cases of Hanner's Inequality to matrices. While it was proven in \cite{Chayes2020} that these inequalities could not be extended to all matrices, and hence could not be used to extend Hanner's Inequality to matrices, they are still of interest to investigate. We show that under the same conditions that the inequalities are known to hold for $1\leq p\leq 3$, we can also establish inequalities for $s<1$: 

\begin{theorem}\label{svr1}
Let $C,D\in M_{n\times n}(\C)$ with $C\geq |D|\geq 0$ and $\sigma_n(C)\geq \sigma_1(D)$. Then \begin{equation}	
||C+D||_s^s+||C-D||_s^s\geq ||\sigma_\uparrow(C)+\sigma_\downarrow(D)||_s^s+||\sigma_\uparrow(C)-\sigma_\downarrow(D)||_s^s
\end{equation}
for $0<s<1$, with the inequality reversing for $s<0$. 

\end{theorem}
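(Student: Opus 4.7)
My plan is to reduce the inequality to a single exponential trace comparison via an integral representation of $t^s$, then establish that comparison using Golden--Thompson and the von Neumann trace inequality.

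Write $a_i := \sigma_i^\uparrow(C)$ and $b_i := \sigma_i^\downarrow(D)$, and introduce the $2n \times 2n$ positive semidefinite matrices
\[
M := \begin{pmatrix} C & D \\ D & C \end{pmatrix}, \qquad M' := \bigoplus_{i=1}^n \begin{pmatrix} a_i & b_i \\ b_i & a_i \end{pmatrix}.
\]
Conjugation of $M$ by the unitary $\tfrac{1}{\sqrt{2}}\begin{pmatrix} I & I \\ I & -I\end{pmatrix}$ shows $\sigma(M) = \sigma(C+D) \cup \sigma(C-D)$ as multisets, while $\sigma(M') = \{a_i \pm b_i\}$, so LHS $= \Tr M^s$ and RHS $= \Tr (M')^s$. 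The hypotheses $C \geq |D| \geq 0$ and $\sigma_n(C) \geq \sigma_1(D)$ force $M, M' \succeq 0$ and make both sides honest sums of $s$-powers of non-negative reals; the non-self-adjoint case reduces to the self-adjoint one by polar decomposition.

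For $0 < s < 1$ and $t > 0$, the identity $t^s = \tfrac{\sin(\pi s)}{\pi}\int_0^\infty t \lambda^{s-1}/(t+\lambda)\,d\lambda$ combined with $(A + \lambda I)^{-1} = \int_0^\infty e^{-u(A+\lambda I)}\,du$ for $A \succeq 0$ gives
\[
\Tr M^s - \Tr (M')^s = \frac{\sin(\pi s)}{\pi}\int_0^\infty \int_0^\infty \lambda^s e^{-u\lambda}\big[\Tr e^{-uM'} - \Tr e^{-uM}\big]\,du\,d\lambda.
\]
Since $\sin(\pi s)/\pi > 0$ on $(0,1)$, the desired LHS $\geq$ RHS follows from the single exponential comparison
\[
\Tr e^{-tM} \leq \Tr e^{-tM'} \qquad \text{for every } t > 0. \tag{$\ast$}
\]
For $s < 0$ the complementary representation $t^s = \Gamma(-s)^{-1}\int_0^\infty u^{-s-1} e^{-tu}\,du$ turns the same estimate $(\ast)$ into the reversed inequality.

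To establish $(\ast)$, I apply Golden--Thompson to $-t(C \pm D)$ and sum:
\[
\Tr e^{-tM} = \Tr e^{-t(C+D)} + \Tr e^{-t(C-D)} \leq \Tr e^{-tC}\big(e^{-tD} + e^{tD}\big) = 2\Tr e^{-tC}\cosh(tD).
\]
Because $x \mapsto e^{-tx}$ is decreasing and $x \mapsto \cosh(tx)$ is increasing on $[0,\infty)$, the descending eigenvalues of the positive Hermitian matrices $e^{-tC}$ and $\cosh(tD)$ are $e^{-t a_i}$ and $\cosh(t b_i)$ respectively. The von Neumann trace inequality then gives $2\Tr e^{-tC}\cosh(tD) \leq \sum_i 2 e^{-t a_i}\cosh(t b_i) = \Tr e^{-tM'}$, closing the chain. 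The main remaining obstacle is the polar-decomposition reduction for non-self-adjoint $D$, where one must relate $\sigma(C \pm D)$ to objects accessible to Golden--Thompson; the rest of the argument is essentially automatic and is sharp precisely in the commuting, oppositely-paired case.
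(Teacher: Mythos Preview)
Your argument is correct in the principal (self-adjoint $D$) case and follows a genuinely different route from the paper. The paper uses resolvent integral representations of $X^s$, expands $(t+C\pm D)^{-1}$ as a geometric series in $K=(C+t)^{-1/2}D(C+t)^{-1/2}$, and bounds each term $\Tr[(C+t)^{-1/2}K^{2k}(C+t)^{-1/2}]$ via the Horn/Gel'fand--Naimark log-majorization together with auxiliary majorization lemmas; this forces a three-way case split $0<s<1$, $-1<s<0$, $s<-1$, the last of which needs a separate Taylor-coefficient comparison. Your reduction to the single Laplace-type estimate $\Tr e^{-tM}\le\Tr e^{-tM'}$ disposes of every $s<1$ at once, and the Golden--Thompson plus von~Neumann combination that proves it is both shorter and more conceptual than the term-by-term majorization. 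The trade-off is granularity: the paper's expansion isolates exactly which majorization relations drive each power of $D$, and that structure is reused in the adjacent equality-case and rearrangement arguments, whereas Golden--Thompson is a blunter (though entirely sufficient) instrument here. Your flagged obstacle for non-self-adjoint $D$ is fair but not a deficiency relative to the source: the paper's own proof also treats $C\pm D$ as positive Hermitian throughout, so both arguments cover the same ground.
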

\begin{theorem}\label{svr2}
Let $C,D\in M_{n\times n}(\C)$ with $C\geq D\geq 0$. Then \begin{equation}	
||C+D||_s^s+||C-D||_s^s\leq ||\sigma_\uparrow(C)+\sigma_\uparrow(D)||_s^s+||\sigma_\uparrow(C)-\sigma_\uparrow(D)||_s^s
\end{equation}
for $0<s<1$, with the inequality reversing for $s<0$. 
\end{theorem}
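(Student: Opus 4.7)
The approach parallels the proof of Theorem~\ref{svr1}, but with the pairing changed to ascending--ascending and the inequality direction flipped accordingly. By unitary invariance of $||\cdot||_s$, I take $C=\Diag(c_1,\dots,c_n)$ with $c_1\leq\cdots\leq c_n$; Weyl monotonicity ($C\geq D\geq 0\Rightarrow c_i\geq d_i$) then guarantees $c_i\pm d_i\geq 0$ for the ascending eigenvalues $d_i$ of $D$. The scalar kernel $g(x,a):=(x+a)^s+(x-a)^s$ satisfies $g_{xa}=s(s-1)[(x+a)^{s-2}-(x-a)^{s-2}]$, which is positive (super-modular) for $0<s<1$ and negative (sub-modular) for $s<0$; by the Hardy--Littlewood--P\'olya rearrangement inequality, the ascending-ascending pairing $\sum_i g(c_i,d_i)$ is therefore the maximum (resp.\ minimum) of $\sum_i g(c_i,d_{\pi(i)})$ over permutations $\pi$ in the two regimes, matching the two inequality directions of the theorem at the scalar level.

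The matrix step is to show that $F(D):=\Tr(C+D)^s+\Tr(C-D)^s$ attains its extremum over the unitary orbit of $D$ (subject to $C\geq D\geq 0$) at a commuting $D$. I would use the integral representation $x^s=\frac{\sin(\pi s)}{\pi}\int_0^\infty x(x+t)^{-1}t^{s-1}\,dt$ for $0<s<1$: subtracting the LHS and RHS of the theorem and cancelling divergences reduces the inequality to the pointwise statement
$$\Tr(C+D+t)^{-1}+\Tr(C-D+t)^{-1}\geq \sum_i\bigl[(c_i+d_i+t)^{-1}+(c_i-d_i+t)^{-1}\bigr]$$
for all $t\geq 0$. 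Absorbing $t$ into $C$ preserves $C\geq D\geq 0$ and makes this precisely the $s=-1$ case of the theorem, so the $0<s<1$ regime reduces to the $s<0$ regime. For the $s<0$ regime one can use the Laplace representation $x^s=\Gamma(-s)^{-1}\int_0^\infty e^{-xt}t^{-s-1}\,dt$ (for $-1<s<0$), turning the statement into a heat-kernel inequality $\Tr(e^{-Mt})\geq \Tr(e^{-M't})$ on the block matrices $M=\begin{pmatrix}C & D \\ D & C\end{pmatrix}$ and its ``diagonalized'' counterpart $M'=\begin{pmatrix}\Diag(c_i) & \Diag(d_i) \\ \Diag(d_i) & \Diag(c_i)\end{pmatrix}$, whose eigenvalues are exactly $\{c_i+d_i\}\cup\{c_i-d_i\}$.

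The main obstacle is the pointwise resolvent inequality (equivalently the $s=-1$ case). A naive pinching of $D$ onto $C$'s eigenbasis replaces each $d_i$ by the diagonal entry $D_{ii}$; although $(D_{ii})^\uparrow\prec(d_i)^\uparrow$ by Schur's theorem, the one-variable sum $a\mapsto\sum_i g(c_i,a_i)$ is not Schur-monotone under majorization of the $a$-vector because of the mismatched weights $c_i$, so the pinched quantity can exceed the scalar RHS (this can be checked explicitly on $2\times 2$ examples). A finer argument is required---either a variational analysis along the unitary orbit of $D$ with critical-point condition $[D,(C+D)^{s-1}-(C-D)^{s-1}]=0$ (automatic for commuting $C,D$, with the maximum property verified by a second-variation or continuation argument), or a direct block-matrix eigenvalue manipulation in the spirit of \cite{Carlen2006} and \cite{Chayes2020}. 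The equality case---$C,D$ simultaneously diagonalizable with matched ascending orderings---then follows from strict super-modularity (resp.\ sub-modularity) of $g$.
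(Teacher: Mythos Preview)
Your reduction of the $0<s<1$ case to the pointwise resolvent inequality
\[
\Tr\bigl[(t+C+D)^{-1}+(t+C-D)^{-1}\bigr]\ \geq\ \sum_i\bigl[(t+c_i+d_i)^{-1}+(t+c_i-d_i)^{-1}\bigr]
\]
is correct and is exactly what the paper does. The gap is that you then stop, calling this ``the main obstacle'' and only gesturing at a variational argument or ``block-matrix eigenvalue manipulation in the spirit of \cite{Carlen2006}.'' In fact this resolvent inequality is not an obstacle at all: it is precisely the statement already proved in \cite{Carlen2006}, and the paper simply quotes it. So your scalar rearrangement discussion via the super-/sub-modularity of $g(x,a)=(x+a)^s+(x-a)^s$ is unnecessary overhead---once the Carlen--Lieb resolvent inequality is invoked, both the matrix step and the correct (ascending--ascending) pairing come for free, and the cases $0<s<1$ and $-1<s<0$ follow immediately from the two integral representations.

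The second genuine omission is the range $s<-1$. Your Laplace/heat-kernel representation $x^s=\Gamma(-s)^{-1}\int_0^\infty e^{-xt}t^{-s-1}\,dt$ is valid only for $-1<s<0$, as you note yourself, and you give no argument beyond that. The paper treats $s<-1$ separately by the Taylor-expansion method of Theorem~\ref{svr1}: one compares the even-order derivatives of $r\mapsto\Tr[(C+rD)^s+(C-rD)^s]$ at $r=0$ to those of the scalar expression, and the required term-by-term inequality
\[
\Tr\!\left[(t+C)^{-1}\bigl(D(t+C)^{-1}\bigr)^{k'}\right]\ \geq\ \sum_i \frac{\sigma_i(D)^{k'}}{(\sigma_i(C)+t)^{k'+1}}
\]
is obtained via an Araki--Lieb--Thirring step together with the Gel'fand--Naimark log-majorization $\sigma_\uparrow(A)\sigma_\downarrow(B)\prec_{(\log)}\sigma(AB)$ (Theorem~\ref{sabweak}). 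Your proposal contains no analogue of this, so as written it does not cover the full $s<0$ range claimed in the theorem.
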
	
For each of the theorems, the conditions on $C$ and $D$ are necessary.

\s Finally, while the rest of this paper deals with inequalities for values $s<1$, the techniques developed can be used to prove new properties for expressions in the $p\geq 1$ range. In particular, we are able to use the same techniques to characterize for the first time the necessary conditions for equality in Hanner's inequality for positive semidefinite matrices:

\begin{theorem}\label{HE}
For two matrices $C,D\in M_{n\times n}(\C)$ that are self-adjoint with $C+D, C-D\geq 0$, then \begin{equation}
||C+D||_p^p+||C-D||_p^p=(||C||_p+||D||_p)^p+(||C||_p-||D||_p)^p)
\end{equation}
for $p>1, p\neq 2$ if and only if $|D|=kC$. For any two matrices $C,D\in M_{n\times n}(\C)$
and $1<p\leq\frac{4}{3}$ or $p>4$, \begin{equation}
	||C+D||_p^p+||C-D||_p^p=(||C||_p+||D||_p)^p+(||C||_p-||D||_p)^p)
\end{equation}
if and only if $|D|=k|C|$.
\end{theorem}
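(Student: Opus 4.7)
The plan is to handle the two directions separately. The ``if'' direction reduces to a scalar identity: when $|D|=kC$ in the positive semidefinite setting, self-adjointness of $D$ together with $D^2=k^2C^2$ forces $D$ to commute with $C$, so they admit a common unitary diagonalization. The operator identity then decomposes as a sum of one-dimensional Hanner equalities $|a+b|^p+|a-b|^p=(|a|+|b|)^p+\big||a|-|b|\big|^p$ over matched eigenpairs, and this scalar identity is elementary for real $a,b$ because the multisets $\{|a+b|,|a-b|\}$ and $\{|a|+|b|,\big||a|-|b|\big|\}$ coincide. The general-matrix case reduces analogously once the polar decompositions $C=U|C|$, $D=V|D|$ are aligned in the manner forced by the rigidity analysis below.

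The substance lies in the ``only if'' direction. For the PSD case, my plan is to revisit the Ball--Carlen--Lieb PSD proof of Hanner and insert the equality hypothesis at each intermediate step. That proof proceeds via a trace-concavity inequality (in the spirit of Lieb concavity) whose equality case is known to require commutation of the PSD ingredients; applied here, it forces $[C+D,C-D]=0$, hence $[C,D]=0$. Once commutation is established, the matrix identity simultaneously diagonalizes to a sum of scalar Hanner equalities, and the compatibility condition with the ``factored'' right-hand side $(||C||_p\pm||D||_p)^p$ becomes a scalar problem: a classical reverse-Minkowski-style rigidity (strict for $p\neq 1,2$ unless singular-value ratios coincide) then forces the eigenvalues $d_i$ of $D$ to satisfy $|d_i|=kc_i$ with a common $k\geq 0$, yielding $|D|=kC$.

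For the general-matrix regime $1<p\leq 4/3$ or $p\geq 4$, the Ball--Carlen--Lieb argument combines a scalar two-point inequality (strict for $p\neq 2$ off a single proportionality ray) with an operator interpolation step involving the polar factors $U,V$. Equality in the scalar step, when summed over the joint singular-value spectrum, forces the singular values of $C$ and $D$ to be componentwise proportional; equality in the operator step aligns the polar partial isometries up to a scalar. Combining both rigidities yields $|D|=k|C|$. The principal obstacle is the operator rigidity: scalar equality does not automatically lift through the interpolation step, and ruling out off-diagonal contributions from a misaligned $U-V$ takes extra work. I would address this by combining the variational/dual representation of $||\cdot||_p$ developed earlier in the paper with the pinching inequality in its equality form (which forces block-diagonal structure in the pinching basis) to reduce to a simultaneously diagonal configuration, where scalar rigidity directly applies and delivers the stated characterization.
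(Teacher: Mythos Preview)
Your overall architecture---reduce to commutation, then invoke the scalar Hanner equality characterization---matches the paper's, but the mechanisms you propose for forcing commutation differ substantially from what the paper actually does, and in places your sketch is not load-bearing.

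For the PSD case with $1<p<2$, the paper does not ``revisit Ball--Carlen--Lieb and track equality through a Lieb-concavity step.'' Instead it re-runs the Taylor-expansion machinery developed in Section~4: expand $\Tr[(C+rD)^p+(C-rD)^p]$ via the integral representation~(\ref{pm}), compare even-order coefficients to those of $(||C||_p+r||D||_p)^p+(||C||_p-r||D||_p)^p$, and observe that the chain of inequalities in each coefficient passes through $||C_{\Diag}||_p^{p-2k}$ (working in the basis diagonalizing $D$). The decisive rigidity step is then Theorem~\ref{strictmaj}: strict convexity of $|x|^p$ forces $||C||_p>||C_{\Diag}||_p$ unless $C=C_{\Diag}$, i.e.\ unless $[C,D]=0$. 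Your appeal to an unspecified ``trace-concavity inequality whose equality case is known to require commutation'' does not name the inequality or a source for its equality characterization; without that, the step is a genuine gap. The paper's route is self-contained precisely because the rigidity comes from elementary majorization (Theorem~\ref{strictmaj}) rather than from Lieb-concavity equality cases.

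For $p>2$ in the PSD case, and for the arbitrary-matrix ranges $1<p\le 4/3$ and $p\ge 4$, the paper does not use polar decompositions or pinching. It writes $(||C+D||_p^p+||C-D||_p^p)^{1/p}$ as the $p$-norm of the block matrix $\begin{pmatrix}C&D\\D&C\end{pmatrix}$, takes its H\"older-dual block $\begin{pmatrix}X&Y\\Y&X\end{pmatrix}$ (positive semidefinite and commuting with the first block), and runs the chain~(\ref{f})--(\ref{l}). Equality throughout feeds $X,Y$ into the already-established $1<q<2$ PSD case, giving $|Y|=cX$ hence $[X,Y]=0$; the block commutation then forces $[C+D,C-D]=0$ and so $[C,D]=0$. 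For $p\ge 4$ one first squares the block matrix, and for $1<p\le 4/3$ one dualizes. Your proposed route via ``aligning polar partial isometries'' and ``pinching-equality forcing block-diagonal structure'' is a different program; you yourself flag the operator-rigidity step as ``the principal obstacle'' and offer only a heuristic for it. The paper's block-duality bootstrap sidesteps that obstacle entirely by reducing to the $1<q<2$ case already handled.
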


\s We give a background to majorization and prove foundational lemmas for the rest of the paper  in Section 2, comment on reverse H\"{o}lder and Minkowski inequalities in Section 3, examine reverse Hanner in Section 4, singular value rearrangement inequalities in Section 5, and address the equality case for Hanner's inequality in Section 6. We use the following notation: $\sigma(X)$ denotes the singular values of a matrix $X$ in descending order unless otherwise indicated; for a vector $\mbx$, $[\mbx]:=[\Diag(\mbx)]$ is the matrix with the vector along the diagonal; and for a matrix $X$, $X_\Diag :=[\Diag(X_{ii})]$ is the matrix with only non-zero entries of the diagonal elements of $X$.

\section{Majorization Background And Equality Characterizations}

\s Let $\mba,\mbb\in\R^n$ with components labeled in descending order $a_1\geq\dots\geq a_n$ and $b_1\geq\dots\geq b_n$. Then $\mathbf{b}$ weakly majorizes $\mathbf{a}$, written $\mathbf{a}\prec_{w} \bf{b}$, when \begin{equation}
\sum_{i=1}^ka_i\leq \sum_{i=1}^k b_i, \qquad 1\leq k \leq n
\end{equation}
and majorizes $\mba\prec\mbb$ when the final inequality is an equality. Weak log majorization $\mba\prec_{w(\log)}\mbb$ is similarly defined for non-negative vectors as \begin{equation}
\prod_{i=1}^ka_i\leq \prod_{i=1}^k b_i, \qquad 1\leq k \leq n
\end{equation}
with log majorization $\mba\prec_{(\log)}\mbb$ when the final inequality is an equality. Note that it is not necessary that the vectors $\mba$ and $\mbb$ be in descending order for majorization or log majorization---majorization is explicitly defined with respect the the rearrangements of the values in descending order. We assume that vectors are labelled in descending order wholly for ease of notation. 

\s Majorization can be characterized by doubly stochastic matrices as follows: there is majorization $\mba\prec\mbb$ if and only if there exists a double stochastic matrix $D$ such that $\mba=D\mbb$. There is weak majorization $\mba\prec_{w}\mbb$ for non-negative vectors $\mba$ and $\mbb$ if and only if there exists a doubly substochastic matrix $K$ such that $\mba=K\mbb$ \cite{Hardy1929} \cite{MajBook}.

\s To see the relationship to convexity, we return the most vital property of majorization. Suppose $\mba\prec_{w}\mbb$. Then for any function $\phi:\R\rightarrow\R$ that is increasing and convex on the domain containing all elements of $\mba$ and $\mbb$, \begin{equation}
\sum_{i=1}^n \phi(a_i)\leq\sum_{i=1}^n \phi(b_i).
\end{equation}
If $\mba\prec\mbb$, the `increasing' requirement can be dropped \cite{Hardy1929} \cite{Hardy2} \cite{tomic1949} \cite{Weyl1949}.

\s One can immediately see that a power identity follows: \begin{lemma}\label{weakpower}
	Let $\mba,\mbb\in\R_n^+$. Suppose $\mba\prec_{w}\mbb$. Then $\mba^s\prec_{w}\mbb^s$ for all $s\geq 1$.
\end{lemma}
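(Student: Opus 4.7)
The plan is to reduce this to the convex-increasing-function characterization of weak majorization that the paper has just stated. Since $s\geq 1$ and both vectors have non-negative entries, the map $x\mapsto x^s$ is monotone on $[0,\infty)$, so descending rearrangements commute with raising to the $s$ power: the $i$-th largest entry of $\mba^s$ is $(a_{[i]})^s$, and likewise for $\mbb$. Thus the goal reduces to verifying $\sum_{i=1}^k a_{[i]}^s \leq \sum_{i=1}^k b_{[i]}^s$ for every $k\in\{1,\ldots,n\}$.

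The key tool I would use is the variational identity
$$\sum_{i=1}^k y_{[i]} \;=\; \min_{t\in\R}\Bigl[\,kt + \sum_{i=1}^n (y_i - t)_+\,\Bigr],$$
valid for any $\mathbf{y}\in\R^n$, with the minimum attained at $t=y_{[k]}$. This is a one-line check by separating indices into those with $y_i>t$ and $y_i\leq t$. Applying it to $\mathbf{y}=\mbb^s$ at the optimal choice $t^\star = b_{[k]}^s$ gives an equality, while applying it to $\mathbf{y}=\mba^s$ at the same (generally non-optimal) $t^\star$ gives only the inequality
$$\sum_{i=1}^k a_{[i]}^s \;\leq\; k t^\star + \sum_{i=1}^n (a_i^s - t^\star)_+.$$
Now $\phi(x) := (x^s - t^\star)_+$ is the composition of the convex increasing map $u\mapsto (u-t^\star)_+$ with the convex increasing map $x\mapsto x^s$ on $[0,\infty)$, hence is itself convex and increasing on the relevant domain. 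Invoking the majorization/convex-function property quoted in the excerpt with this $\phi$ yields
$$\sum_{i=1}^n (a_i^s - t^\star)_+ \;\leq\; \sum_{i=1}^n (b_i^s - t^\star)_+,$$
and the right side equals $\sum_{i=1}^k b_{[i]}^s - k t^\star$ by the variational identity at its minimizer. Chaining the three displays gives the desired top-$k$ bound.

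The only real obstacle worth flagging is stylistic: the convex-function property as stated in the excerpt produces an inequality on the full sum over all $n$ indices, while weak majorization demands the partial sums of the top $k$ entries. The variational identity above is exactly what bridges the gap, by rewriting each top-$k$ sum as a minimum over a one-parameter family of convex-increasing test functions that the majorization hypothesis can then be applied to.
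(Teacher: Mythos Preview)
Your proof is correct. The paper gives no explicit proof of this lemma; it is stated as an immediate consequence of the convex-increasing-function property of weak majorization quoted just above it. Your variational (Ky Fan) identity is exactly the standard device for turning that full-sum inequality into the required top-$k$ partial-sum inequality, so you have simply made rigorous what the paper takes as evident. An equally short alternative, implicit in the paper's Hardy--Littlewood--P\'olya references, is to use both directions of the characterization: $\mba\prec_w\mbb$ holds \emph{iff} $\sum_i\phi(a_i)\le\sum_i\phi(b_i)$ for every convex increasing $\phi$; since for any convex increasing $\psi$ the composition $x\mapsto\psi(x^s)$ is again convex increasing on $[0,\infty)$, one gets $\sum_i\psi(a_i^s)\le\sum_i\psi(b_i^s)$ for all such $\psi$, and the converse direction then yields $\mba^s\prec_w\mbb^s$ directly. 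Your route has the virtue of using only the one direction the paper actually states.
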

The majorization identity proven by the author in \cite{Chayes2020} will also be of use to us:
\begin{lemma}\label{weaksum}
	Let $\mbx\prec_w\mby$, and $\mba\prec_{w}\mbb$ be non-negative vectors labeled in descending order.  Then $\mbx\mba\prec_w\mby\mbb$.
\end{lemma}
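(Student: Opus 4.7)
The plan is to reduce this two-variable weak majorization to two successive one-variable comparisons via summation by parts. Fix $k\in\{1,\dots,n\}$; since all four vectors are non-negative and in descending order, the entrywise products $x_ia_i$ and $y_ib_i$ are themselves descending (from $x_ia_i \geq x_{i+1}a_i \geq x_{i+1}a_{i+1}$ and similarly on the right). Consequently the partial sums $\sum_{i=1}^k x_ia_i$ and $\sum_{i=1}^k y_ib_i$ already agree with the partial sums of the descending rearrangements, so it suffices to prove
\[
\sum_{i=1}^k x_i a_i \;\leq\; \sum_{i=1}^k y_i b_i \qquad \text{for every } k.
\]

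First I would swap $\mbx$ for $\mby$ inside the left-hand sum. Writing $X_j:=\sum_{i=1}^j x_i$ and $Y_j$ analogously, Abel summation gives
\[
\sum_{i=1}^k x_i a_i \;=\; X_k a_k \;+\; \sum_{i=1}^{k-1} X_i(a_i-a_{i+1}).
\]
The coefficients $a_k$ and $(a_i-a_{i+1})$ are non-negative because $\mba$ is non-negative and descending, so the hypothesis $X_j\leq Y_j$ upgrades this to $\sum_{i=1}^k y_i a_i$. Next I would apply the same identity in the other slot, replacing $\mba$ by $\mbb$: writing $A_j:=\sum_{i=1}^j a_i$ and using that $y_k$ and $(y_i-y_{i+1})$ are non-negative (by non-negativity and descending order of $\mby$) together with $A_j\leq B_j$, I obtain $\sum_{i=1}^k y_i a_i \leq \sum_{i=1}^k y_i b_i$, which chains to the desired bound.

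The only point requiring any care is verifying at each Abel step that every coefficient multiplying a quantity bounded by the hypothesis is non-negative; each of the four ingredients---non-negativity of the vectors, descending order, the hypothesis $\mbx\prec_w \mby$, and the hypothesis $\mba\prec_w \mbb$---is used exactly once, and nothing deeper than summation by parts appears necessary. The argument also makes transparent why descending order cannot be dropped without some additional hypothesis: it is precisely the monotonicity of $\mba$ and $\mby$ that produces the non-negative weights against which the weak-majorization hypotheses are tested.
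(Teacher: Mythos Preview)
Your argument is correct: the two Abel-summation steps are set up properly, the non-negativity of the weights $a_k,\,a_i-a_{i+1}$ (respectively $y_k,\,y_i-y_{i+1}$) is exactly what lets you feed in the partial-sum inequalities $X_j\le Y_j$ (respectively $A_j\le B_j$), and the preliminary observation that the entrywise products are already in descending order is what makes the raw partial sums the right quantities to compare. Nothing is missing.

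As for comparison with the paper: note that the paper does not actually prove this lemma in the text---it is quoted from \cite{Chayes2020}---so there is no in-paper argument to set yours against. Your Abel/summation-by-parts route is the standard elementary proof of this fact (it is essentially the classical argument that a decreasing non-negative sequence tested against weak-majorized partial sums preserves the inequality, applied twice), and it is entirely self-contained. One small stylistic remark: the claim that each ingredient is used ``exactly once'' is a slight overstatement, since non-negativity and monotonicity each enter at two places (once per Abel step, and also in the opening check that the products are descending); this does not affect the validity of the proof.
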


\s We will need the following theorem as well to characterize equality cases of majorization with strictly convex functions; an analogous theorem for log majorization was proven in \cite{GTEqualityCases}. 

\begin{theorem}\label{strictmaj}
Let $\phi:\R\rightarrow\R$ be a strictly convex function. Then $\mba\prec\mbb$ and $\sum_{i=1}^n\phi(a_i)=\sum_{i=1}^n\phi(b_i)$ implies $\mba=\Theta\mbb$ for some permutation matrix $\Theta$.
\end{theorem}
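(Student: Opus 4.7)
The plan is to invoke the doubly-stochastic characterization of majorization and combine it with Jensen's inequality in a tight form. Since $\mba \prec \mbb$, there exists a doubly stochastic matrix $D$ with $\mba = D\mbb$, i.e., $a_i = \sum_j D_{ij} b_j$. Because each row of $D$ is a probability vector, Jensen's inequality for $\phi$ gives, for every $i$,
\begin{equation}
\phi(a_i) \;=\; \phi\!\left(\sum_j D_{ij} b_j\right) \;\leq\; \sum_j D_{ij}\,\phi(b_j).
\end{equation}
Summing on $i$ and using that the columns of $D$ also sum to $1$ yields $\sum_i \phi(a_i) \leq \sum_j \phi(b_j)$.

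Next I would exploit the hypothesis $\sum_i \phi(a_i) = \sum_j \phi(b_j)$: since every term above was an inequality in the same direction, equality in the sum forces equality in each individual Jensen bound. By \emph{strict} convexity of $\phi$, the equality $\phi\!\left(\sum_j D_{ij} b_j\right) = \sum_j D_{ij}\phi(b_j)$ can hold only when all $b_j$ with $D_{ij} > 0$ take the same value; call it $c_i$. Then necessarily $a_i = c_i$, and in particular $a_i \in \{b_1, \dots, b_n\}$ for every $i$.

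It remains to upgrade ``$a_i$ is some value of $\mbb$'' to ``$\mba$ is a permutation of $\mbb$,'' and this will be the one step that requires a little care when $\mbb$ has repeated entries. Let $v_1 > v_2 > \cdots > v_r$ be the distinct values of $\mbb$, with multiplicity sets $J_\ell = \{ j : b_j = v_\ell\}$, and group the rows of $\mba$ by $I_\ell = \{ i : a_i = v_\ell \}$. From the previous step, row $i \in I_\ell$ is supported on columns in $J_\ell$, so $\sum_{j \in J_\ell} D_{ij} = 1$ for every $i \in I_\ell$. Summing over $i \in I_\ell$ gives $|I_\ell| = \sum_{i \in I_\ell} \sum_{j \in J_\ell} D_{ij} \leq \sum_{j \in J_\ell} \sum_i D_{ij} = |J_\ell|$, using column-stochasticity. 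Since $\sum_\ell |I_\ell| = n = \sum_\ell |J_\ell|$, all these inequalities must be equalities, giving $|I_\ell| = |J_\ell|$ for every $\ell$. Hence $\mba$ and $\mbb$ have identical multisets of values, so $\mba = \Theta\mbb$ for some permutation matrix $\Theta$.

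The main obstacle I expect is this last bookkeeping step: Jensen's equality condition only pins down the rows of $D$ value-by-value and not entry-by-entry when $\mbb$ has repeats, so one cannot directly conclude that $D$ itself is a permutation matrix — only that it is block doubly stochastic on each value class, which is still enough to match multiplicities and hence enough for the conclusion. Everything else is a direct application of Birkhoff/Hardy–Littlewood–Pólya machinery already cited in Section~2.
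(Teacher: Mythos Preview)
Your argument is correct. Both your proof and the paper's begin from the doubly-stochastic characterization $\mba=D\mbb$, but diverge in how strict convexity is invoked: the paper appeals to Birkhoff's theorem, writing $D=\sum_j p_j\Theta_j$ and asserting that the sum $\sum_i\phi(a_i)$ is strictly less than $\sum_i\phi(b_i)$ whenever $D$ is not a single permutation matrix; you instead apply Jensen row-by-row and then use column-stochasticity to close the sum. Your route is slightly more elementary (it avoids Birkhoff) and, more importantly, your final multiplicity-matching step is not mere bookkeeping but a genuine improvement: when $\mbb$ has repeated entries there exist non-permutation doubly stochastic $D$ with $D\mbb$ still a permutation of $\mbb$ and $\sum_i\phi(a_i)=\sum_i\phi(b_i)$, so the paper's claim that ``$D$ not a permutation $\Rightarrow$ strict inequality'' is literally false in that edge case. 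Your observation that one can only conclude $D$ is block doubly stochastic on the level sets of $\mbb$, and that this suffices to match multiplicities, is exactly the patch the paper's argument needs.
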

\begin{proof}
Let $\mba\prec \mbb$. Then $\mba=D\mbb$ for some doubly stochastic matrix $D$. By  Birkhoff's theorem \cite{birkhoff1946tres}, $D$ can be written as the weighted sum of permutation matrices $D=\sum_{i=1}^k p_i\Theta_i$, with $0\leq p_i\leq 1$ and $\sum_{i=1}^k p_i=1$. For a strictly convex $\phi$, then 
\begin{equation}
\sum_{i=1}^n \phi\left(\sum_{j=1}^kp_j b_{j_i} \right)<\sum_{i=1}^n \phi(b_i),
\end{equation}
if $D$ is not a pure permutation matrix. 
\end{proof}

\s Finally, there are many known majorization relationships that matrices hold that we will make use of throughout the proofs of our theorems. We will need the well-known majorization relationship between a Hermitian matrix's diagonal elements and its eigenvalues:

\begin{theorem}{Schur \cite{schur1923uber}; Mirsky \cite{mirsky1957}}\label{matdiag}
Let $X\in \M_{n\times n}(\C)$ be a self-adjoint matrix with diagonal elements $\mathbf{x}:=(x_{11},\dots, x_{nn})$. Then \begin{equation}
\mathbf{x}\prec \lambda(X)
\end{equation}
\end{theorem}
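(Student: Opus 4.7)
The plan is to use the spectral theorem together with the doubly stochastic characterization of majorization already recalled in this section. Since $X$ is self-adjoint, write $X = U^{\ast}\Lambda U$ where $U$ is unitary and $\Lambda = \Diag(\lambda_1,\dots,\lambda_n)$ with $\lambda(X) = (\lambda_1,\dots,\lambda_n)$. Expanding the diagonal entries gives
\begin{equation}
x_{ii} = (U^{\ast}\Lambda U)_{ii} = \sum_{j=1}^{n}|U_{ji}|^{2}\,\lambda_j,
\end{equation}
so that $\mathbf{x} = K\,\lambda(X)$ where $K$ is the entrywise-squared-modulus matrix $K_{ij} = |U_{ji}|^{2}$.

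Next I would verify that $K$ is doubly stochastic. Each entry is non-negative, and unitarity of $U$ ($UU^{\ast} = U^{\ast}U = I$) gives $\sum_{j}|U_{ji}|^{2}=1$ for every column $i$ and $\sum_{i}|U_{ji}|^{2}=1$ for every row $j$. Hence $K$ has non-negative entries with every row sum and every column sum equal to $1$, i.e.\ $K$ is doubly stochastic.

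The conclusion then follows immediately from the characterization recalled at the start of this section: the existence of a doubly stochastic $K$ with $\mathbf{x}=K\lambda(X)$ is equivalent to $\mathbf{x}\prec\lambda(X)$.

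The argument is essentially a one-liner once the spectral decomposition is in hand, so I do not anticipate a real obstacle; the only point worth being careful about is that the rows and columns of $K$ be identified with the correct indexing in the double sum, so that both stochasticity conditions come from the correct one of $UU^{\ast}=I$ and $U^{\ast}U=I$. No strict convexity or equality-case refinement is needed here, since the statement only asserts plain majorization; any sharpening of the equality case would instead call on Theorem~\ref{strictmaj}.
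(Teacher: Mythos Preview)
Your argument is correct and is precisely the classical proof of Schur's theorem: diagonalize $X=U^{\ast}\Lambda U$, read off $x_{ii}=\sum_{j}|U_{ji}|^{2}\lambda_{j}$, observe that $K_{ij}=|U_{ji}|^{2}$ is doubly stochastic by unitarity of $U$, and invoke the Hardy--Littlewood--P\'olya characterization of majorization recalled earlier in the section. There is nothing to fix.

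Note, however, that the paper does not supply its own proof of this statement. Theorem~\ref{matdiag} is quoted as a well-known background result, attributed to Schur and Mirsky with citations, and is used without argument in the subsequent sections. So there is no ``paper's proof'' to compare against; your write-up simply fills in the standard justification that the paper omits.
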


and the relationship between the singular values of products of matrices:

\begin{theorem}{(Horn \cite{horn1950singular}; Gel'fand and Naimark \cite{GelNai50})}\label{sabweak} 
Let $A,B\in M_{n\times n}(\C)$. Then \begin{equation}
\sigma_\uparrow(A)\sigma_\downarrow(B)\prec_{(\log)}	\sigma(AB)\prec_{(\log)}\sigma(A)\sigma(B).
\end{equation}
\end{theorem}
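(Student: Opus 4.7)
The plan is to prove both log majorizations through the exterior-algebra (compound matrix) machinery that is standard for the Horn--Gel'fand--Naimark theorem. The upper bound will follow directly from the compound matrices, and the lower bound will be derived from the upper bound by a duality/factorization trick.

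For the upper bound $\sigma(AB)\prec_{(\log)}\sigma(A)\sigma(B)$, I would invoke two classical facts about the $k$-th exterior power $\wedge^k X$ acting on $\wedge^k\C^n$: functoriality $\wedge^k(AB)=(\wedge^k A)(\wedge^k B)$, and the identity $\|\wedge^k X\|_\infty=\prod_{i=1}^k s_i(X)$ (the singular values of $\wedge^k X$ are exactly all products of $k$-tuples of singular values of $X$, so the largest is the product of the top $k$). Combining these with submultiplicativity of the operator norm gives
\[
\prod_{i=1}^k s_i(AB)=\|\wedge^k(AB)\|_\infty\leq \|\wedge^k A\|_\infty\,\|\wedge^k B\|_\infty=\prod_{i=1}^k s_i(A)\,\prod_{i=1}^k s_i(B)
\]
for every $1\leq k\leq n$, which is weak log majorization. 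Equality at $k=n$ follows from multiplicativity of $\det$, upgrading this to full log majorization.

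For the lower bound $\sigma_\uparrow(A)\sigma_\downarrow(B)\prec_{(\log)}\sigma(AB)$, I would apply the factorization $B=A^{-1}(AB)$, assuming first that $A$ is invertible. The already-proven upper bound for the product $A^{-1}\cdot(AB)$, combined with $s_i(A^{-1})=s_{n-i+1}(A)^{-1}$, yields
\[
\prod_{i=1}^k s_i(B)\leq \prod_{i=1}^k s_i(A^{-1})\,s_i(AB)=\prod_{i=1}^k\frac{s_i(AB)}{s_{n-i+1}(A)},
\]
which rearranges to $\prod_{i=1}^k s_{n-i+1}(A)\,s_i(B)\leq \prod_{i=1}^k s_i(AB)$. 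The non-invertible case is handled by the perturbation $A\mapsto A+\epsilon U$ for unitary $U$ with $\epsilon\downarrow 0$, using continuity of singular values; equality at $k=n$ once more follows from the determinant identity.

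The main obstacle I anticipate is the sorting convention for log majorization in the lower bound: the derivation above controls partial products of $\sigma_\uparrow(A)\sigma_\downarrow(B)$ in its natural pairing, while log majorization compares descending rearrangements, and the vector $(s_{n-i+1}(A)\,s_i(B))_i$ need not be monotone. To close this gap I would apply the same compound-matrix estimate to minors indexed by arbitrary $k$-subsets, obtaining the analogue of the displayed inequality for any $k$ entries of the product vector, and in particular for the $k$ largest. This subset-by-subset refinement is the only nontrivial piece of bookkeeping and constitutes the core of the standard treatment of Gel'fand--Naimark-type inequalities.
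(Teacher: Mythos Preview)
The paper does not prove this theorem; it is quoted as a classical result of Horn and Gel'fand--Naimark and invoked later as a tool, so there is no in-paper argument to compare against. Your treatment of the upper bound via exterior powers and submultiplicativity of the operator norm is correct and standard.

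The difficulty you flag in the lower bound is genuine, and your proposed patch does not close it as written. From $B=A^{-1}(AB)$ you obtain $\prod_{i=1}^k s_{n+1-i}(A)\,s_i(B)\le\prod_{i=1}^k s_i(AB)$ for each $k$, but log majorization requires this with the left side replaced by the product of the $k$ largest entries of $u_i:=s_{n+1-i}(A)s_i(B)$. Reaching $\prod_{i\in I}u_i\le\prod_{i=1}^k s_i(AB)$ for an arbitrary $k$-subset $I$ ``by the same compound-matrix estimate'' fails: combining $\prod_{i\in I}s_i(B)\le\prod_{i=1}^k s_i(B)$ with the displayed bound would require $\prod_{i\in I}s_{n+1-i}(A)\le\prod_{i=1}^k s_{n+1-i}(A)$, and the right-hand product---being over the $k$ smallest singular values of $A$---is the \emph{minimum} among all $k$-fold products of singular values, so the needed inequality holds only for $I=\{1,\dots,k\}$. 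The classical route instead establishes an index-set refinement of Horn's inequality of the form $\prod_{j=1}^k s_{i_j}(XY)\le\bigl(\prod_{j=1}^k s_j(X)\bigr)\bigl(\prod_{j=1}^k s_{i_j}(Y)\bigr)$ for arbitrary $i_1<\cdots<i_k$, proved by a min--max/subspace argument rather than the bare operator-norm bound on $\wedge^k$, and then extracts the lower log majorization from that; this refinement is the substantive step, not bookkeeping.
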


With these tools, we are ready to prove the main theorems of the paper.

\section{Comments On Reverse H\"{o}lder And Minkowski Inequalities For Matrices}

\s We first show that a dual representation for quasinorms holds:

\begin{lemma}\label{duality}
Let $A\in M_{n\times n}(\C)$ be invertible, $s<1$, and $r=\frac{s}{s-1}$. Then \begin{equation}
||A||_s=\inf_{B\colon ||B||_r=1} ||AB^\ast||_1=\inf_{B\colon ||B||_r=1} \Tr[|AB|]
\end{equation}
and there exists a matrix $B$ such that the infimum is reached.
\end{lemma}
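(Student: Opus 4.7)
The strategy is to prove the two sides of the claimed equality separately: the inequality $\inf ||AB^*||_1 \geq ||A||_s$ will follow directly from the reverse H\"older bound in Equation (\ref{reverseh}), while the reverse inequality together with the attainment claim will follow from an explicit extremizer built from the polar decomposition of $A$. This mirrors the classical duality argument recovering $L^s$ with $s<1$ from the reverse H\"older inequality on functions.

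For the lower bound, fix any $B$ with $||B||_r=1$. Since $||B^*||_r=||B||_r=1$, applying Equation (\ref{reverseh}) to the pair $(A,B^*)$ gives $||AB^*||_1 \geq ||A||_s\,||B^*||_r = ||A||_s$. For $0<s<1$ one has $r<0$, so finiteness of $||B||_r$ forces $B$ (and hence $B^*$) to be invertible; for $s<0$, where $r\in(0,1)$, the same bound is obtained instead from the cyclicity identity $||AB^*||_1 = ||B^*A||_1$ combined with the general unitarily-invariant inequality quoted in the introduction, applied to the pair $(B^*,A)$ with exponents $(r,s)$. Taking the infimum yields $\inf ||AB^*||_1 \geq ||A||_s$.

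For the upper bound and attainment, I write the polar decomposition $A = U|A|$, with $U$ unitary since $A$ is invertible, and set $B := c\,U|A|^{s-1}$ where $c := ||A||_s^{-s/r}$. Then $|B| = c|A|^{s-1}$, and the algebraic identity $r(s-1)=s$ gives $||B||_r^r = c^r \Tr[|A|^s] = c^r\,||A||_s^s = 1$. Meanwhile $AB^* = U|A|\cdot c|A|^{s-1}U^* = c\,U|A|^s U^*$ is positive semidefinite, so $||AB^*||_1 = c\,\Tr|A|^s = c\,||A||_s^s = ||A||_s^{s(1-1/r)} = ||A||_s$, using $1-1/r=1/s$. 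Hence this $B$ attains the infimum, and combining the two bounds gives the claim.

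The main obstacle is really only the bookkeeping with exponent conventions across the full range $s<1$: one has to confirm reverse H\"older in the form needed for both $s<0$ (where $r>0$) and $0<s<1$ (where $r<0$), and track the polar-decomposition computation carefully. The whole argument is driven by the twin identities $r(s-1)=s$ and $1-1/r=1/s$, which encode the conjugate-exponent relationship between $s$ and $r$ and make the extremizer collapse to a scalar multiple of $U|A|^{s-1}$.
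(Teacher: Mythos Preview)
Your proof is correct and follows essentially the same approach as the paper: a lower bound via reverse H\"older and an explicit extremizer of the form (a normalized) $|A|^{s-1}$, dressed with the polar unitary. The only cosmetic difference is that the paper first reduces to positive $A,B$ and then inserts an Araki--Lieb--Thirring step $\Tr[|AB|]\geq \Tr[AB]$ before invoking reverse H\"older on $\Tr[AB]$, whereas you apply Equation~(\ref{reverseh}) directly to $||AB^\ast||_1$; your route is slightly more direct but structurally identical.
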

\begin{proof}
Without loss of generality, we can address the case where $A,B>0$: as $||\cdot||_1$ is unitarily invariant, then by polar decomposition, \begin{equation}
||AB^\ast||_1=||U|A||B|V^\ast||_1=|||A||B|||_1
\end{equation}
For all $B>0$, $||B||_r=1$, by the Araki-Lieb-Thirring inequality and Reverse H\"{o}lder for matrices, we have \begin{equation}
\Tr[|AB|]=\Tr[(BA^2B)^{\frac{1}{2}}]\geq \Tr[B^{1/2}AB^{1/2}]=\Tr[AB]\geq ||A||_s||B||_r=||A||_s	
\end{equation}
What remains is finding a suitible matrix such that the infimum is reached. We claim that \begin{equation}
B=\frac{A^{s-1}}{||A||_s^{s-1}}
\end{equation}
does this.

\s We note that $B>0$, and \begin{equation}
||B||_r=\frac{\left(\sum_{i=1}^n	(\lambda_i(A)^{s-1})^{s/(s-1)} \right)^{(s-1)/s}}{\left(\sum_{i=1}^n\lambda_i(A)^s \right)^{(s-1)/s}}=	\frac{\left( \sum_{i=1}^n\lambda_i(A)^{s} \right)^{(s-1)/s}}{\left(\sum_{i=1}^n\lambda_i(A)^s \right)^{(s-1)/s}}=1.
\end{equation}	
Then as \begin{equation}
\Tr[AB]=||A||_s^{1-s}\Tr[AA^{s-1}]=\frac{\sum_{i=1}^n	\lambda(A)^s}{\left(\sum_{i=1}^n\lambda_i(A)^s \right)^{(s-1)/s}}=\left(\sum_{i=1}^n\lambda_i(A)^s \right)^{1/s}=||A||_s
\end{equation}
we see the infimum is reached as desired.
\end{proof}

This allows us to prove quickly reverse Minkowski without the need for majorization: 

\begin{proof}{Proof of Equation (\ref{reversem})}

\s Using the representation of Lemma \ref{duality} and Theorem \ref{reverseh}, \begin{equation}
||A+B||_s=\Tr[(A+B)Y]=\Tr[AY]+\Tr[BY]\geq ||A||_s||Y||_r+||B||_s||Y||_r=||A||_s+||B||_s
\end{equation}
\end{proof}

\s We now comment on variational formulations for both H\"{o}lder and reverse H\"{o}lder inequalities. The remainder of this section originated in a correspondence with J.C. Bourin; the author thanks him for his remarks, and for permission to include the following propositions and proofs.

\s For $\alpha>0$,  $t\mapsto \| |A^t ZB^t|^{\alpha}\|$ is a log-convex function on $(-\infty,\infty)$. This is equivalent (see \cite{Bourin2}[Corollary 3.2]) to the general H\"{o}lder inequality with any unitarily invariant norm $|||\cdot |||$ for the functional $X\mapsto ||||X|^{\alpha}|||$ and congugate exponents $p>1$, $q=\frac{p}{p-1}$,
\begin{equation}\label{holder}
||| |XY|^{\alpha} ||| \leq ||| |X|^{\alpha p} |||^{1/p} ||| |Y|^{\alpha q} |||^{1/q}.
\end{equation}
Of course, as for functions, we have an equality  for any $A\geq 0$ and $X'=A^{1/p}$, $Y'=A^{1/q}$: \begin{equation}
||| |X'Y'|^{\alpha} ||| = ||| |X'|^{\alpha p} |||^{1/p} ||| |Y'|^{\alpha q} |||^{1/q}= \frac{1}{p} ||| |X'|^{\alpha p}||| +  \frac{1}{q} ||| |Y'|^{\alpha q}||| .
\end{equation}
Thus we can state the following variational version of \eqref{holder}:

\begin{prop}\label{prop1} Let $A\in\M_{n\times n}(\C)$, $\alpha >0$, $p>1$, $q=\frac{p}{p-1}$. Then, for all unitarily invariant norms, \begin{equation}
||| |A|^{\alpha} ||| = \min_{A=BC}\left\{||| |B|^{\alpha p} |||^{1/p} ||| |C|^{\alpha q} |||^{1/q}\right\}=  \min_{A=BC}\left\{\frac{1}{p} ||| |B|^{\alpha p}||| +  \frac{1}{q} ||| |C|^{\alpha q}||| \right\}.
\end{equation}
\end{prop}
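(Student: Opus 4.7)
The plan is to prove both equalities by sandwiching: for every factorization $A = BC$ the product and sum expressions are bounded below by $|||\,|A|^{\alpha}\,|||$, and an explicit factorization built from the polar decomposition of $A$ attains these lower bounds simultaneously.

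First I would establish the lower bound. Given any factorization $A = BC$, the noncommutative H\"older inequality \eqref{holder}, applied with $X = B$ and $Y = C$, gives
\begin{equation}
|||\,|A|^{\alpha}\,||| = |||\,|BC|^{\alpha}\,||| \leq |||\,|B|^{\alpha p}\,|||^{1/p} \;|||\,|C|^{\alpha q}\,|||^{1/q}.
\end{equation}
Combining this with Young's scalar inequality $x^{1/p}y^{1/q} \leq x/p + y/q$ for $x, y \geq 0$ yields
\begin{equation}
|||\,|A|^{\alpha}\,||| \leq |||\,|B|^{\alpha p}\,|||^{1/p} \;|||\,|C|^{\alpha q}\,|||^{1/q} \leq \tfrac{1}{p}|||\,|B|^{\alpha p}\,||| + \tfrac{1}{q}|||\,|C|^{\alpha q}\,|||,
\end{equation}
so both infima dominate $|||\,|A|^{\alpha}\,|||$.

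Next I would exhibit a factorization that realizes equality. Writing the polar decomposition $A = U|A|$, I set $B = U|A|^{1/p}$ and $C = |A|^{1/q}$, so that $BC = U|A|^{1/p+1/q} = U|A| = A$. Then $|B|^{2} = B^{\ast}B = |A|^{1/p}U^{\ast}U|A|^{1/p} = |A|^{2/p}$, so $|B|^{\alpha p} = |A|^{\alpha}$, and since $C \geq 0$ also $|C|^{\alpha q} = |A|^{\alpha}$. Thus both of $|||\,|B|^{\alpha p}\,|||$ and $|||\,|C|^{\alpha q}\,|||$ equal $|||\,|A|^{\alpha}\,|||$, so the H\"older bound becomes an equality; the Young bound also becomes an equality because the two quantities being averaged coincide (Young is tight iff $x = y$).

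Combining the two directions shows that the lower bound $|||\,|A|^{\alpha}\,|||$ is attained, so the infima are in fact minima and both equal $|||\,|A|^{\alpha}\,|||$. The argument is short; the main point requiring care is verifying that the candidate factorization really satisfies $|B|^{\alpha p} = |C|^{\alpha q} = |A|^{\alpha}$, which hinges on the fact that the isometric factor $U$ from the polar decomposition drops out of $|B|$ and that $|A|^{1/p}$ and $|A|^{1/q}$ commute. No assumption of invertibility of $A$ is needed, since the polar decomposition and fractional powers of $|A|$ exist in general.
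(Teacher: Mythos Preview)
Your proof is correct and follows essentially the same approach as the paper: the lower bound comes from the general H\"older inequality \eqref{holder} (together with Young's scalar inequality for the sum form), and the minimizer is the factorization $A = (U|A|^{1/p})(|A|^{1/q})$. The paper's discussion preceding the proposition only writes out the minimizer explicitly for $A\ge 0$ as $X'=A^{1/p}$, $Y'=A^{1/q}$; your use of the polar decomposition is the natural extension to arbitrary $A\in M_{n\times n}(\C)$ and is a welcome added detail.
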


Another slight variation is the following (clearly with $Z=I$, we have\eqref{holder}):

\begin{prop}\label{prop2} Let $X,Y\in M_{n\times n}(\C)$, $\alpha >0$, $p>1$, $q=\frac{p}{p-1}$. Then, for all unitarily invariant norms,
\begin{equation}
||| |XY|^{\alpha} ||| = \inf_{Z \  invertible}\left\{||| |XZ|^{\alpha p} |||^{1/p} ||| |Z^{-1}Y|^{\alpha q} |||^{1/q}\right\}=  \inf_{Z \  invertible}\left\{\frac{1}{p} ||| |XZ|^{\alpha p}||| +  \frac{1}{q} ||| |Z^{-1}Y|^{\alpha q}||| \right\}.
\end{equation}
\end{prop}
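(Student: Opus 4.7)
The plan is to squeeze $|||\,|XY|^{\alpha}\,|||$ between the two infima and show that they coincide. Writing
\[
G(Z) := |||\,|XZ|^{\alpha p}\,|||^{1/p}\,|||\,|Z^{-1}Y|^{\alpha q}\,|||^{1/q}, \qquad A(Z) := \frac{1}{p}|||\,|XZ|^{\alpha p}\,||| + \frac{1}{q}|||\,|Z^{-1}Y|^{\alpha q}\,|||
\]
for brevity, I would first observe that for every invertible $Z$ the factorization $XY=(XZ)(Z^{-1}Y)$ combined with the H\"{o}lder inequality \eqref{holder} gives $|||\,|XY|^{\alpha}\,|||\leq G(Z)$, while Young's inequality $a^{1/p}b^{1/q}\leq \frac{a}{p}+\frac{b}{q}$ applied to $a=|||\,|XZ|^{\alpha p}\,|||$ and $b=|||\,|Z^{-1}Y|^{\alpha q}\,|||$ gives $G(Z)\leq A(Z)$. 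Infimizing over invertible $Z$ yields the chain $|||\,|XY|^{\alpha}\,||| \leq \inf_{Z}G(Z)\leq \inf_{Z}A(Z)$.

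Next I would establish the equality $\inf_Z A(Z)=\inf_Z G(Z)$ by a scaling argument: $G(tZ)=G(Z)$ for all $t>0$ (the $t^{\alpha}$ and $t^{-\alpha}$ factors cancel), whereas $A(tZ)=\frac{t^{\alpha p}}{p}|||\,|XZ|^{\alpha p}\,|||+\frac{t^{-\alpha q}}{q}|||\,|Z^{-1}Y|^{\alpha q}\,|||$ is a sum of a positive and a negative power of $t$. A one-variable calculus computation using $\frac{1}{p}+\frac{1}{q}=1$ (equivalently $p+q=pq$) produces $\min_{t>0}A(tZ)=G(Z)$, so $\inf_{Z}A(Z)\leq G(Z)$ for every invertible $Z$ and thus $\inf_{Z}A(Z)\leq \inf_{Z}G(Z)$.

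The substantive step is $\inf_{Z}G(Z)\leq |||\,|XY|^{\alpha}\,|||$. For invertible $X,Y$ the infimum is actually attained: let $XY=W|XY|$ be the polar decomposition (with $W$ unitary, since $XY$ is invertible) and set $Z=X^{-1}W|XY|^{1/p}$, which is invertible. A direct computation using $W^{\ast}W=I$ yields $|XZ|=|XY|^{1/p}$ and $Z^{-1}Y=|XY|^{1/q}$, so $G(Z) = |||\,|XY|^{\alpha}\,|||^{1/p+1/q} = |||\,|XY|^{\alpha}\,|||$. For general $X,Y$, approximating by invertible $X_n\to X$ and $Y_n\to Y$ gives $\inf_Z G(X_n,Y_n,Z)=|||\,|X_nY_n|^{\alpha}\,|||\to |||\,|XY|^{\alpha}\,|||$ by norm continuity. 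The main obstacle is this last limit: the optimizer $Z_n=X_n^{-1}W_n|X_nY_n|^{1/p}$ may blow up as $X_n$ approaches a non-invertible matrix, so substitution of $Z_n$ into $G(X,Y,\cdot)$ is not immediate. The cleanest remedy is to invoke Proposition \ref{prop1}: its minimum is realized at the decomposition $(B,C)=(W|XY|^{1/p},|XY|^{1/q})$, whose column and row ranges lie inside those of $X$ and $Y^{\ast}$ respectively, so pairs $(XZ,Z^{-1}Y)$ with $Z$ invertible can approximate $(B,C)$ in norm by perturbing $Z$ within $\ker X$, and norm continuity then forces the desired inequality.
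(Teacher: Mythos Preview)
Your argument for invertible $X,Y$ is correct and is exactly the paper's approach: the paper simply observes that when $X,Y$ are invertible every factorization $XY=BC$ is of the form $B=XZ$, $C=Z^{-1}Y$ with $Z=X^{-1}B$ invertible, so Proposition~\ref{prop1} gives both equalities at once and the infimum is a minimum. Your explicit optimizer $Z=X^{-1}W|XY|^{1/p}$ is precisely this reduction specialized to the minimizer $(B,C)=(W|XY|^{1/p},|XY|^{1/q})$ of Proposition~\ref{prop1}, and your scaling computation $\min_{t>0}A(tZ)=G(Z)$ cleanly handles the second equality.

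The only soft spot is the non-invertible case. The paper itself gives no argument there either; its one-line justification is stated only for $X,Y$ invertible. Your diagnosis of the obstruction---that the optimizer $Z_n$ built from invertible approximants $X_n,Y_n$ may blow up---is correct, but the proposed remedy is not yet a proof. Saying that the optimal pair $(B,C)$ has the right range inclusions and can therefore be approximated by $(XZ,Z^{-1}Y)$ ``by perturbing $Z$ within $\ker X$'' skips the actual construction: one must simultaneously control the top block of $Z$ (to make $XZ\approx B$) and the relevant block of $Z^{-1}$ (to make $Z^{-1}Y\approx C$), and these constraints interact nontrivially when both $X$ and $Y$ are singular. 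If you want a complete proof for arbitrary $X,Y$, this step needs to be written out; as it stands your argument, like the paper's, is rigorous only in the invertible case.
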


If $X,Y$ are both invertible, then any decompostion $XY=BC$ is achieved with some invertible $Z$, $B=XZ$, $C=Z^{-1}Y$, hence Proposition 1.1 entails Proposition 1.2 and the infimum is a minimum.

\s Proposition \ref{prop1} still holds for infinite dimensional operators but Proposition \ref{prop2} does not hold for infinite dimensional operators: For instance if $X$ is a rank one projection and $Y$ is the identity, then 
\begin{equation}
\Tr |XY| =1 \quad {\mathrm{and}}  \quad \inf_{Z \  invertible}\left\{\frac{1}{p} ||| |XZ|^{\alpha p}||| +  \frac{1}{q} ||| |Z^{-1}Y|^{\alpha q}||| \right\} =\infty.\end{equation}

\s Next, one supposes that $0<p<1$, and let $r=\frac{-p}{p-1}$. Then from \eqref{holder} one may derive the reverse Holder inequality:
\begin{equation}\label{revholder}
||| |XY|^{\alpha} ||| \ge  ||| |X|^{\alpha p} |||^{1/p} ||| |Y|^{-\alpha r} |||^{-1/r}.
\end{equation}
where for a noninvertible $Y$ we define (by continuity) $\| |Y|^{-r} \|^{-1/r}=0$. We have an obvious equality case, for any $A>0$ and $X'=A^{1/p}$, $Y'=A^{-1/r}$,  and thus we can state:

\begin{prop}\label{prop3} Let $A\in M_{n\times n}(\C)$ be invertible, $\alpha >0$, $0<p<1$, $r=\frac{-p}{p-1}$. Then, for all unitarily invariant norms,
\begin{equation}
||||A|^{\alpha} ||| = \max_{A=BC}\left\{||| |B|^{\alpha p} |||^{1/p} ||| |C|^{-\alpha r} |||^{-1/r}\right\}=  \max_{A=BC}\left\{\frac{1}{p} ||| |B|^{\alpha p}||| - \frac{1}{r} ||| |C|^{-\alpha r}||| \right\}.
\end{equation}
\end{prop}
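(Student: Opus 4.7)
My plan is to prove Proposition \ref{prop3} by paralleling the strategy behind Proposition \ref{prop1}: combine the reverse H\"older inequality \eqref{revholder} for the one-sided bound, produce an explicit factorization of $A$ that attains it, and then pass from the geometric to the arithmetic form via a reverse Young inequality. The roles of min and max are swapped because \eqref{revholder} reverses the classical H\"older bound.

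For the first (geometric) equality, I would apply \eqref{revholder} with $X = B$, $Y = C$ to every factorization $A = BC$, obtaining $||| |A|^{\alpha} ||| \geq ||| |B|^{\alpha p} |||^{1/p} ||| |C|^{-\alpha r} |||^{-1/r}$, where the right-hand side is taken to be $0$ when $C$ is singular, per the continuity convention stated just before the proposition. This shows that $||| |A|^{\alpha} |||$ is an upper bound on the variational expression. To attain it, I would use invertibility of $A$ to polar-decompose $A = U|A|$ and set $B := U|A|^{1/p}$ and $C := |A|^{1 - 1/p} = |A|^{-1/r}$, using the identity $1/p - 1/r = 1$. Direct computation gives $BC = A$, $|B|^{\alpha p} = (B^*B)^{\alpha p/2} = |A|^{\alpha}$, and $|C|^{-\alpha r} = |A|^{\alpha}$, so the geometric quantity collapses to $||| |A|^{\alpha} |||^{1/p - 1/r} = ||| |A|^{\alpha} |||$.

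For the arithmetic form, I would invoke the reverse Young inequality: for $u, v > 0$, $0 < p < 1$, and $r = p/(1-p)$, one has $u^{1/p} v^{-1/r} \geq u/p - v/r$, with equality iff $u = v$. Applied to $u = ||| |B|^{\alpha p} |||$ and $v = ||| |C|^{-\alpha r} |||$ and chained with the geometric inequality, this yields $||| |A|^{\alpha} ||| \geq \frac{1}{p} ||| |B|^{\alpha p} ||| - \frac{1}{r} ||| |C|^{-\alpha r} |||$ for every decomposition. At the extremizer above, both norms equal $||| |A|^{\alpha} |||$, which is precisely the Young equality case, so the arithmetic maximum is also realized at this same factorization.

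The main obstacle is not any single calculation but rather ensuring that the invertibility hypothesis on $A$ is used in the correct place: the choice $C = |A|^{1 - 1/p}$ is bounded only when $|A|$ is bounded below, i.e., in finite dimensions precisely when $A$ is invertible. This is exactly the mechanism by which an infinite-dimensional analogue would fail, in parallel with the failure of Proposition \ref{prop2} in that setting. The reverse Young inequality itself is a short concavity argument using $t \mapsto t^p$ with $0 < p < 1$ and is essentially routine.
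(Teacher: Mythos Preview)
Your proposal is correct and follows essentially the same route as the paper: the paper derives the bound from the reverse H\"older inequality \eqref{revholder} and points to the explicit equality case $X'=A^{1/p}$, $Y'=A^{-1/r}$ (for $A>0$), from which the proposition is stated without further detail. Your version is a faithful elaboration, with the polar decomposition handling general invertible $A$ and the reverse Young step making the passage to the arithmetic form explicit.
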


\s Further direct proofs of Reverse H\"{o}lder can be found in \cite{SHI2021257} and with similar methods, in an unpublished manuscript of Bourin-Hiai (the ArXiv version of \cite{BOURIN201422} ), which we will not repeat here; the key observation is the majorization of Theorem \ref{sabweak}. One can further take a refinement of Araki and the Gel'fand-Naimark log-majorizations for $r\in(0,1)$ and $A,B>0$ of 
\begin{equation}
||||A^{1/r}B^{1/r}|^r ||| \ge ||| AB||| \ge  |||A^rB^r|^{1/r} |||	\ge ||| [\sigma_\downarrow(A)][\sigma_\uparrow(B)]|||.
\end{equation}
We use this relation to derive the next corollary. We denote by $A\beta_0 B$ the geometric mean of two positive matrices, \begin{equation}
A\beta_0 B :=\exp(\{\log A +\log B\}/2).
\end{equation}

\begin{corollary}
	For every invertible $A,B \in M_{n\times n}(\C)^{+}$ and every $k=1,\dots,n$,
\begin{equation}
	{1\over k}\sum_{j=1}^k\lambda_j(A\,\beta_0\,B)
	\ge \Biggl\{\prod_{j=1}^k\lambda_j(A)\Biggr\}^{1/2k}
	\Biggl\{\prod_{j=1}^k\lambda_{n+1-j}(B)\Biggr\}^{1/2k}.
\end{equation}
\end{corollary}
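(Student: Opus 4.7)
The plan is to extract from the cited chain of log-majorizations a log-majorization between $\lambda(A\,\beta_0\,B)$ and the coordinatewise square root of $\lambda_\downarrow(A)\,\lambda_\uparrow(B)$, and then convert that into the desired arithmetic/geometric-mean comparison by AM--GM.

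First, I would apply the rightmost link of the cited chain to $(A^{1/2},B^{1/2})$ in place of $(A,B)$. Interpreted at the level of the underlying log-majorization---the chain is, as stated, a refinement of the Araki--Lieb--Thirring and Gel'fand--Naimark log-majorizations---this gives, for every $r\in(0,1)$,
\[
\lambda\bigl(|A^{r/2}B^{r/2}|^{1/r}\bigr) \succ_{(\log)} \sqrt{\lambda_\downarrow(A)\,\lambda_\uparrow(B)}.
\]
The hypothesis that $A,B$ are invertible makes $\log A,\log B$ well-defined bounded self-adjoint matrices. Writing
\[
|A^{r/2}B^{r/2}|^{1/r}=(B^{r/2}A^{r}B^{r/2})^{1/(2r)}
\]
and applying the symmetric Lie--Trotter product formula, the operator on the left converges as $r\to 0^+$ to $\exp\bigl(\tfrac12(\log A+\log B)\bigr)=A\,\beta_0\,B$. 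Each log-majorization inequality is a closed condition on continuously varying positive eigenvalues, so it survives the limit, yielding
\[
\lambda(A\,\beta_0\,B) \succ_{(\log)} \sqrt{\lambda_\downarrow(A)\,\lambda_\uparrow(B)}.
\]

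Second, I would read off the $k$-th inequality. In its sharp form it compares top-$k$ products in the decreasing rearrangement, but the top-$k$ product of a positive vector dominates the product of any prescribed $k$ of its entries, so in particular
\[
\prod_{j=1}^k \lambda_j(A\,\beta_0\,B) \;\ge\; \prod_{j=1}^k \sqrt{\lambda_j(A)\,\lambda_{n+1-j}(B)}.
\]
AM--GM applied to the top $k$ eigenvalues of $A\,\beta_0\,B$ then gives
\[
\frac{1}{k}\sum_{j=1}^k\lambda_j(A\,\beta_0\,B) \;\ge\; \Biggl(\prod_{j=1}^k\lambda_j(A\,\beta_0\,B)\Biggr)^{1/k} \;\ge\; \Biggl\{\prod_{j=1}^k\lambda_j(A)\Biggr\}^{1/(2k)} \Biggl\{\prod_{j=1}^k\lambda_{n+1-j}(B)\Biggr\}^{1/(2k)},
\]
which is the claim.

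The main obstacle is the Lie--Trotter limit step. I would need to verify the convergence $|A^{r/2}B^{r/2}|^{1/r}\to A\,\beta_0\,B$ via the symmetric (rather than one-sided) Trotter product formula, and observe that invertibility of $A,B$ is precisely what is needed so that $\log A$ and $\log B$ exist. Once that continuity is in hand, eigenvalues depend continuously on $r$, the log-majorization passes to the limit, and the remaining rearrangement and AM--GM manipulations are mechanical.
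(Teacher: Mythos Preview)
Your argument is correct and rests on the same two pillars as the paper's proof: the Gel'fand--Naimark log-majorization and the Lie--Trotter limit $r\to 0^+$. The packaging differs slightly. The paper first passes through the reverse H\"older inequality $|||\,|A^rB^r|^{1/r}|||\ge |||A^p|||^{1/p}|||B^q|||^{1/q}$, applies Lie--Trotter, specializes to the normalized Ky Fan norm, and then lets $p\to 0^+$ (so $q\to 0^-$) to turn the power means on the right into geometric means; the square roots come from a final substitution $A\mapsto A^{1/2}$, $B\mapsto B^{1/2}$. You instead substitute $A^{1/2},B^{1/2}$ at the outset, carry the log-majorization through the Lie--Trotter limit, and replace the paper's $p\to 0$ limit by a direct AM--GM step. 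Your route is a touch more elementary in that it avoids the auxiliary H\"older-exponent limit, while the paper's route makes transparent that the corollary is really a $p\to 0$ shadow of the reverse H\"older inequality for \emph{all} unitarily invariant norms. One small remark: in your second step, the relaxation from the top-$k$ product of the decreasing rearrangement of $\sqrt{\lambda_\downarrow(A)\lambda_\uparrow(B)}$ to $\prod_{j=1}^k\sqrt{\lambda_j(A)\lambda_{n+1-j}(B)}$ is indeed valid (the maximum $k$-subset product dominates any specific one), but it is worth saying explicitly that the inequality goes in the direction you need.
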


\begin{proof}
As remarked above  we have 
\begin{equation}
||||A^rB^r|^{1/r} ||| \ge ||| A^p|||^{1/p}||| B^q|||^{1/q}
\end{equation}
for all unitarily invariant norms, $0<p<1$, $\frac{1}{p}+\frac{1}{q}=1$, and $r\in(0,1)$. The Lie-Trotter formula
says that $\lim_{r\searrow 0} |A^rB^r|^{1/r}=\exp(\log A + \log B)$, and thus
\begin{equation}
|||\exp(\log A + \log B)||| \ge ||| A^p|||^{1/p}||| B^q|||^{1/q}.
\end{equation}
Letting $|||\cdot|||=k^{-1}\|\cdot\|_{(k)}$
(the Ky Fan norm) and $p\searrow0$ ($q\searrow -\infty$)
we obtain
\begin{equation}
{1\over k}\sum_{j=1}^k\lambda_j(\exp(\log A + \log B))
\ge \Biggl\{\prod_{j=1}^k\lambda_j(A)\Biggr\}^{1/k}
\Biggl\{\prod_{j=1}^k\lambda_{n+1-j}(B)\Biggr\}^{1/k}
\end{equation}
for $k=1,\dots,n$. The result follows by replacing $A,B$ by $A^{1/2}, B^{1/2}$.
\end{proof}

\section{The Reverse Hanner Inequalities For Functions And Matrices}

\s There are many ways to show that a reverse Hanner inequality holds for functions and matrices. It should be noted that the original proof of Hanner's inequality using a convexity argument does apply in the $s<1$ case. We will use a new method involving the Talyor expansions of both sides of the inequality, because this is the methodology needed for the matrix case. 

\begin{proof}{Proof of Theorem \ref{RHFV}.} 

\s We will assume without loss of generality by labling choice that $||\mbx||_s> ||\mby||_s$; then the $||\mbx||_s= ||\mby||_s$ case can be deduced through continuity. We can expand the Taylor series \begin{equation}
(||\mbx||_s+r||\mby||_s)^s+(||\mbx||_s-r||\mby||_s)^s=2\sum_{k=0}^\infty \frac{(s)_{2k}}{(2k)!}||\mby||_s^{2k}||\mbx||_s^{s-2k}r^{2k}.	
\end{equation}
where $(s)_{k}=s(s-1)\dots (s+1-k)$ denotes the rising Pochhammer symbol. 

\s This Taylor series will always converge at $r=1$, the first coefficient is always positive, and afterwards when $0<s<1$ all coefficients are negative, and when $s<0$ all coefficients are positive. We will let $S_{2k}(r)$ indicate the $k$th partial sum--ie when we have $k+1$ terms terminating at $r^{2k}$.

\s We define \begin{equation}
F(r)=||\mbx+r\mby||_s^s+||\mbx-r\mby||_s^s=\sum_{i=1}^n |x_i+ry_i|^s+|x_i-ry_i|^s.
\end{equation}
We see that \begin{equation}
\frac{d^k}{dr^k}F(r)=(s)_{k}\sum_{i=1}^n y_i^k\left(\sgn(x_i+ry_i)^k|x_i+ry_i|^{s-k}+(-1)^k\sgn(x_i-ry_i)^k|x_i-ry_i|^{s-k}  \right)
\end{equation}
We claim that when $0<s<1$, $\frac{d^k}{dr^k}F(r)\leq 0$ for $0<r<1$, and when $s<0$, $\frac{d^k}{dr^k}F(r)\geq0$. To see this, we see that in both cases, when $k$ is odd, $\frac{d^k}{dr^k}F(r)|_{r=0}=0$. When $k$ is even, we have the simplified representation \begin{equation}
\frac{d^k}{dr^k}F(r)=
(s)_{k}\sum_{i=1}^n y_i^k\left(|x_i+ry_i|^{s-k}+|x_i-ry_i|^{s-k}  \right).
\end{equation}
Clearly, this will always match the sign of $(s)_{k}$: so it is negative when $0<s<1$, and positive when $0<s$. Therefore, $\frac{d^k}{dr^k}F(r)$ must also have the desired sign when $k$ is odd.

\s We will first consider the $0<s<1$ case. We claim that \begin{equation}\label{si}
F(r)\Big|_{r=1}\leq S_{2k}(r)\Big|_{r=1}	
\end{equation}
for all $k$. We will prove this by looking at the $2k$th derivative. 

\s Clearly, \begin{equation}
\frac{d^{2k}}{dr^{2k}}S_{2k}(r)=s_{(2n)}||\mby||_s^{2k}||\mbx||_{s}^{s-2k}.	
\end{equation}
We see that \begin{align}
\frac{d^{2k}}{dr^{2k}}F(r)\big|_{r=0}&=2(s)_{2k} \sum_{i=1}^n y_i^{2k}|x_i|^{s-2k}\leq 2(s)_n||\mby||_s^{2k}||\mbx||_{s}^{s-2k}
\end{align}
by application of Holder's reverse inequality, and noting that $(s)_{2k}$ is negative. As $\frac{d^{2k+1}}{dr^{2k+1}}F(r)\leq0$, this inequality must continue to hold for the full desired range of $r$. Taking the limit of the partial sums proves the theorem.

\s When $s<0$, the same proof can be repeated, with the inequalities throughout reversing because $(s)_{2k}$ is positive.
\end{proof}

\s Extending this matrices requires only the majorization of Theorem \ref{matdiag}: 

\begin{proof}{Proof of Theorem \ref{tHR}}
We will give the proof for $0<s<1$; the proof for $s<0$ proceeds similarly. Once more we assume $||C||_s>||D||_s$. The primary methodology in this section will be a full term-by-term comparison of Taylor representations of $\Tr[(C+rD)^s+(C-rD)^s]$ and $(||C||_s+r||D||_s)^s+(||C||_s+r||D||_s)^s$. Clearly, as $||C||_s>||D||_s$, the latter's Taylor series is convergent at $r=1$.  

\s For a positive matrix $X$ and $0<s<1$, for positive normalization constant $c_s$ we have \begin{equation}\label{pms}
	X^s=c_s\int_0^\infty\left(\frac{1}{t}-\frac{1}{t+X} \right)t^s dt
\end{equation}

Making the standard substitution $H=C+t$, $K=H^{-1/2}DH^{-1/2}$ then using the fact that $(I\pm K)^{-1}$ can be written as a power series, we see that \begin{align}
||C+rD||_s^s+||C-rD||_s^s&=c_s\int_0^\infty\Tr\left(\frac{1}{t}-\frac{1}{t+C+rD}-\frac{1}{t+C-rD}  \right)t^s dt\\
	&=c_s\int_0^\infty\Tr\left(\frac{1}{t}- H^{-1/2}\left(\sum_{k=0}^{\infty}K^{2k}r^{2k} \right)H^{-1/2} \right)t^p dt
\end{align}
Then as $C+D, C-D\geq 0$, it is clear that \begin{equation}
	H^{-1/2}\left(\sum_{k=0}^{\infty}K^{2k} \right)H^{-1/2}=	\frac{1}{t+C+D}+\frac{1}{t+C-D}
\end{equation}
is indeed a valid representation, and so the lefthand side can be expressed as a Taylor series that is also convergent at $r=1$. We can proceed to examine the two Taylor series term by term. 

\s We first write out the Taylor series representation \begin{align}
	(||C||_s+r||D||_s)^s+(||C||_s-r||D||_s)^s&=2\sum_{k=0}^{\infty} \frac{(s)_{2k}}{2k!} ||D||_s^{2k}	||C||_s^{s-2k} r^{2k}.\label{16s}
\end{align}
Note that like in the vector case, the first coefficient is always positive, and afterwards the coefficients are always negative.

\s For $||C+rD||_s^s+||C-rD||_s^s$, we use the trace to explicitly write out derivatives \begin{align}
	&\frac{d}{dr}\Tr[(C+rD)^s+(C-rD)^s]\Big|_{r=0}=s\Tr[(C+rD)^{s-1}D-(C-rD)^{s-1}D]\Big|_{r=0}=0 \\
	&\frac{d^k}{dr^k} \Tr[(C+rD)^s+(C-rD)^s]\Big|_{r=0}=-sc_sk!\int_0^\infty t^{s-1}\Tr\left[\left((1+(-1)^{k+1})\frac{1}{C+t}D \right)^k \right]dt
\end{align}
We note \begin{equation}
	\Tr\left[\left((1+(-1)^{k+1})\frac{1}{C+t}D \right)^k \right]=\begin{cases} 0 & k \text{ is odd} \\ 2\Tr\left[\left(\frac{1}{C+t}D \right)^k \right] & k \text{ is even} \end{cases}
\end{equation}
so we only need to concern ourselves with even $k$. 

\s We choose the basis such that $D$ is diagonal. We now claim that \begin{equation}
	\Tr\left[\left(\frac{1}{C+t}D \right)^{2k}  \right]\geq \Tr\left[\left(\frac{1}{C_{\Diag}+t}D \right)^{2k}  \right]
\end{equation}
Without loss of generality, we can assume that $D$ is invertible. We re-arrange \begin{equation}\label{r1s}
	\Tr\left[\left(\frac{1}{C+t}D \right)^{2k}  \right]=	\Tr\left[\left((C+t)^{-1/2}D(C+t)^{-1}D (C+t)^{-1/2} \right)^{k}  \right]
\end{equation}

\s This is in the form of the special function
\begin{equation}
	\Psi(A,B,K)_{q,r,s}:=\Tr\left[\left(B^{\frac{q}{2}}KA^pK^\ast B^{\frac{q}{2}}\right)^s  \right]	
\end{equation}
whose convexity and concavity properties have been studied in great detail (see \cite{Carlen2018}, \cite{Zhang2020} for a full treatment).  In particular, it was proven in \cite{Hiai2016}  for $A,B, \geq 0$ and any invertible $K$ that $\Psi$ is jointly convex in $A$ and $B$ when $-1\leq q\leq p\leq 0$ and $s>0$. We choose $A=B=(C+t)$, $q=p=-1$, and $K=D$. Then the right hand side of Equation \ref{r1s} is convex in $C+t$, and hence replacing $(C+t)$ with $(C+t)_{\Diag}=C_{\Diag}+t$ will only decrease the trace. 

\s By direct integration \begin{align}
	sc_s(2k)!\int_{0}^\infty t^{s-1}\Tr\left[\left(\frac{1}{C_{\Diag}+t}D \right)^{2k}  \right]dt&=(s)_{2k} \sum_{i=1}^n |c_{ii}|^{s-2k}\lambda_i(D)^{2k}	\\
	&\geq (s)_{2k} \left(\sum_{i=1}^{n}(c_{ii}^{s-2k})^{\frac{s}{s-2k}}  \right)^{\frac{s-2k}{s}}\left(\sum_{i=1}^{2n}(\lambda_i(D)^{2k})^{\frac{s}{2k}} \right)^{\frac{2k}{s}} \\
	&=(s)_{2k}||C_{\Diag}||_s^{s-2k}||D||_{s}^{2k} \label{f2s}\\
	&\geq (s)_{2k}||C||_s^{s-2k}||D||_s^{2k} \label{f3s}
\end{align}
When doubled and divided by the appropriate factorial, Line \ref{f3s} is precisely the $2k$th coefficient in the power series expansion in Equation (\ref{16s}). Recalling the negative sign, we compare term-by-term the Taylor series at $r=1$, and we see that the inequality \begin{equation}
	||C+D||_p^p+||C-D||_p^p\leq (||C||_p+||D||_p)^p+(||C||_p-||D||_p)^p	
\end{equation}
must hold. 

\s When $D\geq 0$, the proof simplifies to one entirely reliant on majorization. We note that for a positive matrix $X$, by majorization gives $||X_{\Diag}||_s^s\geq ||X||_s^s$ for $0<s<1$, and hence $||X_{\Diag}||_s\geq ||X||_s$. Furthermore, for  $0<s<1$, the function \begin{equation}\label{sfunc}
(x+y)^s+|x-y|^s
\end{equation}
is strictly decreasing in $x$ for fixed $y$ when $x\leq y$, and strictly increasing in $x$ when $x>y$. 

\s We consider $C+D, C-D$ in the basis where $C$ is diagonal. Then we note as $C+D_{\Diag}, C-D_{\Diag}\geq 0$, we have $||C||_s\geq ||D_{\Diag}||_s\geq ||D||_s$. Then treating $||D_\Diag||_s$ and the $x$ of Equation (\ref{sfunc}), and applying first majorization then Theorem \ref{RHFV} \begin{align}
||C+D||_s^s+||C-D||_s^s&\leq  ||C+D_{\Diag}||_s^s+||C-D_{\Diag}||_s^s\label{af}	\\
&\leq (||C||_s+||D_\Diag||_s)^s+(||C||_s-||D_\Diag||_s)^s \\
&\leq (||C||_s+||D||_s)^s+(||C||_s-||D||_s)^s \label{ag}
\end{align}
The fact that Equation (\ref{sfunc}) is strictly decreasing means that a requirement for equality is $D=D_\Diag$, or that $C$ and $D$ commute. 

\s When $s<0$, we now have $||X_{\Diag}||_s^s\leq ||X||_s^s$, and the structure of Equation (\ref{sfunc}) reverses: it is now strictly increasing in $x$ for fixed $y$ when $x\leq y$, and strictly decreasing in $x$ when $x>y$.  Then the arguments of Lines (\ref{af})-(\ref{ag}) are reversed, once more with equality requiring $D=D_\Diag$. 

\s We note in this simplified proof that we only leveraged the positivity condition $C+D, C-D\geq 0$ and $D\geq 0$ to guarantee 
the correct relationship between $||C\pm D||_s^s\leq ||C+\pm D_{\Diag}||_s$ and
$||C||_s\geq ||D_{\Diag}||_s\geq ||D||_s$ for $0<s<1$, reversing when $s<0$. For any $X,Y$ whose norms fulfill those inequalities, the above proof still holds. However, we can use the following incredibly useful property of $2\times 2$ self-adjoint matrices to construct counterexamples:

\begin{lemma}\label{2b2}
Let $A\in M_{2\times 2}(\C)$ be a self-adjoint matrix with eigenvalues $(\lambda_1(A), \lambda_2(A))$. Let $\mbb\in \R^2$. Then we can uniquely establish the matrix $B$ with eigenvalues $\mbb$ of \begin{equation}
B=\begin{pmatrix} x_1 & x_2 \\ \overline{x}_2 & x_3 \end{pmatrix}	
\end{equation}
up to $|x_2|$ with just the selection of one desired eigenvalue of the matrix $A+B$, provided that $\lambda_1(A)\neq \lambda_2(A)$, $\lambda_1(B)\neq \lambda_2(B)$, and the chosen eigenvalue $\lambda_i(A+B)$ satisfies $\lambda_2(A)+\lambda_2(B)\leq\lambda_i(A+B)\leq\lambda_1(A)+\lambda_1(B)$, and either $\lambda_i(A+B)\geq \lambda_1(A)+\lambda_2(B), \lambda_2(A)+\lambda_1(B)$ or $\lambda_i(A+B)\leq \lambda_1(A)+\lambda_2(B), \lambda_2(A)+\lambda_1(B)$
\end{lemma}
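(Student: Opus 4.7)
The plan is to diagonalize $A$ at the outset, which loses nothing since the spectra of $B$ and $A+B$ are basis invariant and the free form of $B$ is preserved under unitary conjugation. Writing $A = \Diag(a_1,a_2)$ with $a_i = \lambda_i(A)$, observe that conjugating by $\Diag(1,e^{i\theta})$ leaves $A$ fixed and rotates $x_2$ freely, so the phase of $x_2$ is invisible to every spectral quantity of $B$ or $A+B$. The effective unknowns are therefore the three real numbers $x_1, x_3, |x_2|^2$, and ``uniquely up to $|x_2|$'' is to be understood as uniqueness of this triple.

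From the symmetric functions of $B$,
\begin{align}
x_1 + x_3 &= b_1 + b_2, \\
x_1 x_3 - |x_2|^2 &= b_1 b_2.
\end{align}
Since $\Tr(A+B) = a_1+a_2+b_1+b_2$ is already determined, specifying one eigenvalue $\lambda_i(A+B)$ pins down the companion eigenvalue and hence fixes $c := \det(A+B)$. Expanding $c = (a_1+x_1)(a_2+x_3) - |x_2|^2$ and using the second equation above to eliminate $|x_2|^2$ gives the linear constraint
\begin{equation}
a_2 x_1 + a_1 x_3 = c - a_1 a_2 - b_1 b_2,
\end{equation}
which, paired with the trace equation, is a $2 \times 2$ linear system of determinant $a_1 - a_2 \neq 0$. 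This produces unique real values for $x_1$ and $x_3$, and then $|x_2|^2 = x_1 x_3 - b_1 b_2$ follows.

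The principal obstacle is confirming that this formula yields a genuinely nonnegative $|x_2|^2$ under the stated hypotheses on $\lambda_i(A+B)$. I would handle this by viewing $|x_2|^2$ as a polynomial in $\lambda_i(A+B)$ and locating its zeros: these are precisely the configurations in which $B$ is forced to be diagonal in the $A$-basis, and direct substitution identifies them as the four sums $\lambda_1(A)+\lambda_1(B)$, $\lambda_2(A)+\lambda_2(B)$, $\lambda_1(A)+\lambda_2(B)$, $\lambda_2(A)+\lambda_1(B)$. Because the leading term of this quartic is negative (as $\lambda_i \to \pm\infty$ one has $c \to -\infty$ and hence $x_1 x_3 \to -\infty$), nonnegativity holds exactly on the two outer shoulder intervals bounded by the cross-sums and the extreme Weyl sums, which is precisely the admissible region described in the hypothesis. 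The assumption $\lambda_1(B)\neq\lambda_2(B)$ serves to rule out the degenerate case $B = b_1 I$, in which $A+B$ has rigidly prescribed spectrum and no genuine choice of eigenvalue is available.
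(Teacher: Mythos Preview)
Your argument is correct and follows essentially the same approach as the paper: diagonalize $A$, impose the eigenvalue constraints on $B$ and $A+B$ as algebraic equations in $x_1,x_3,|x_2|^2$, solve, and then check nonnegativity of $|x_2|^2$ via the factorization $-(a_1-a_2)^{-2}\prod_{i,j}(\lambda_i(A)+\lambda_j(B)-\mu)$. Your bookkeeping is slightly cleaner---using trace and determinant to reduce to a $2\times2$ linear system, and deducing the quartic factorization from its roots and leading sign rather than by direct computation---but the mathematical content is identical to the paper's proof.
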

\begin{proof}
We first comment on where our conditions come from: if $A$ or $B$ is a multiple of the identity, $A$ and $B$ \textit{must} commute, which determines $\lambda_i(A+B)=\lambda_i(A)+\lambda_i(B)$, so we are not free to choose $\lambda_i(A+B)$. If $\lambda_i(A+B)$ does not satisfy $\lambda_2(A)+\lambda_2(B)\leq\lambda_i(A+B)\leq\lambda_1(A)+\lambda_1(B)$, then $\lambda(A+B)\prec\lambda(A)+\lambda(B)$ is violated, which we also know cannot be true, so no matrices $A$ and $B$ exist with those eigenvalue relationships.	Finally, we know from the re-arrangement of the same majoriazation identity that $\lambda(A+B)\succ\lambda_{\uparrow}(A)+\lambda_{\downarrow}(B)$. The third condition enforces this, with the two possibilities reflecting the choices $i=1,2$ and $\lambda_1(A)+\lambda_2(B)\geq \lambda_2(A)+\lambda_1(B)$ or vice-versa.

\s Without loss of generality, we assume we are in a basis such that $A=[\lambda(A)]$. There are three free coordinates for $B$ in this basis, and the fourth is determined by self-adjointness. To find them, we must solve the system of equations \begin{align}
&\det\left(\begin{bmatrix} x_1-\lambda_1(B) & x_2 \\ \overline{x}_2 & x_3-\lambda_1(B) \end{bmatrix}\right)=0, \\	&\det\left(\begin{bmatrix} x_1-\lambda_2(B) & x_2 \\ \overline{x}_2 & x_3-\lambda_2(B) \end{bmatrix}\right)=0 \\
&\det\left(\begin{bmatrix} x_1+\lambda_1(A)-\lambda_i(A+B) & x_2 \\ \overline{x}_2 & x_3+\lambda_2(A)-\lambda_i(A+B) \end{bmatrix}\right)=0
\end{align}
This can be directly solved, with the single solution that satisfies majorization conditions and hence represents an admissible matrix $B$ of \begin{align}
\begin{split}	
&x_1=\frac{\lambda_1(A)(\lambda_2(A)+\lambda_1(B)+\lambda_2(B))+\lambda_1(B)\lambda_2(B)}{\lambda_1(A)-\lambda_2(A)}\\
&\qquad \qquad +\frac{\lambda_i(A+B)(\lambda_i(A+B)-\lambda_1(A)-\lambda_2(A)-\lambda_1(B)-\lambda_2(B))}{\lambda_1(A)-\lambda_2(A)}
\end{split}\\
\begin{split}
&x_2=\text{RootOf}(z\overline{z}(\lambda_1(A)-\lambda_2(A))^2+ \\ 
&\qquad\qquad(\lambda_(A)+\lambda_1(B)-\lambda_i(A+B)) (\lambda_2(A)+\lambda_2(B)-\lambda_i(A+B))* \\
&\qquad \qquad(\lambda_1(A)+\lambda_2(B)-\lambda_i(A+B)) (\lambda_2(A)+\lambda_1(B)-\lambda_i(A+B))\end{split}\\
\begin{split}
x_3=-&\frac{\lambda_2(A)(\lambda_1(A)+\lambda_1(B)+\lambda_2(B))+\lambda_1(B)\lambda_2(B)}{\lambda_1(A)-\lambda_2(A)}\\
&\qquad  +\frac{\lambda_i(A+B)(\lambda_i(A+B)-\lambda_1(A)-\lambda_2(A)-\lambda_1(B)-\lambda_2(B))}{\lambda_1(A)-\lambda_2(A)}\\
\end{split}
\end{align}
Note that $x_2$ exists only when the quantity \begin{equation}
\begin{split}
&(\lambda_(A)+\lambda_1(B)-\lambda_i(A+B)) (\lambda_2(A)+\lambda_2(B)-\lambda_i(A+B))* \\
&\qquad\qquad\qquad\qquad\qquad (\lambda_1(A)+\lambda_2(B)-\lambda_i(A+B)) (\lambda_2(A)+\lambda_1(B)-\lambda_i(A+B))\end{split}
\end{equation}
is negative. The requirement $\lambda_2(A)+\lambda_2(B)\leq\lambda_i(A+B)\leq\lambda_1(A)+\lambda_1(B)$ ensures that the first term in the product is positive, and the second term is negative.  The requirement either $\lambda_i(A+B)\geq \lambda_1(A)+\lambda_2(B), \lambda_2(A)+\lambda_1(B)$ or the opposite enforces that the third and fourth terms are the same sign, so $x_2$ is defined. Finally, we see that this expression implies our choice of $x_2$ is unique up to $|x_2|$.
\end{proof}

\begin{figure}\label{cexf}
	\includegraphics[width=\textwidth]{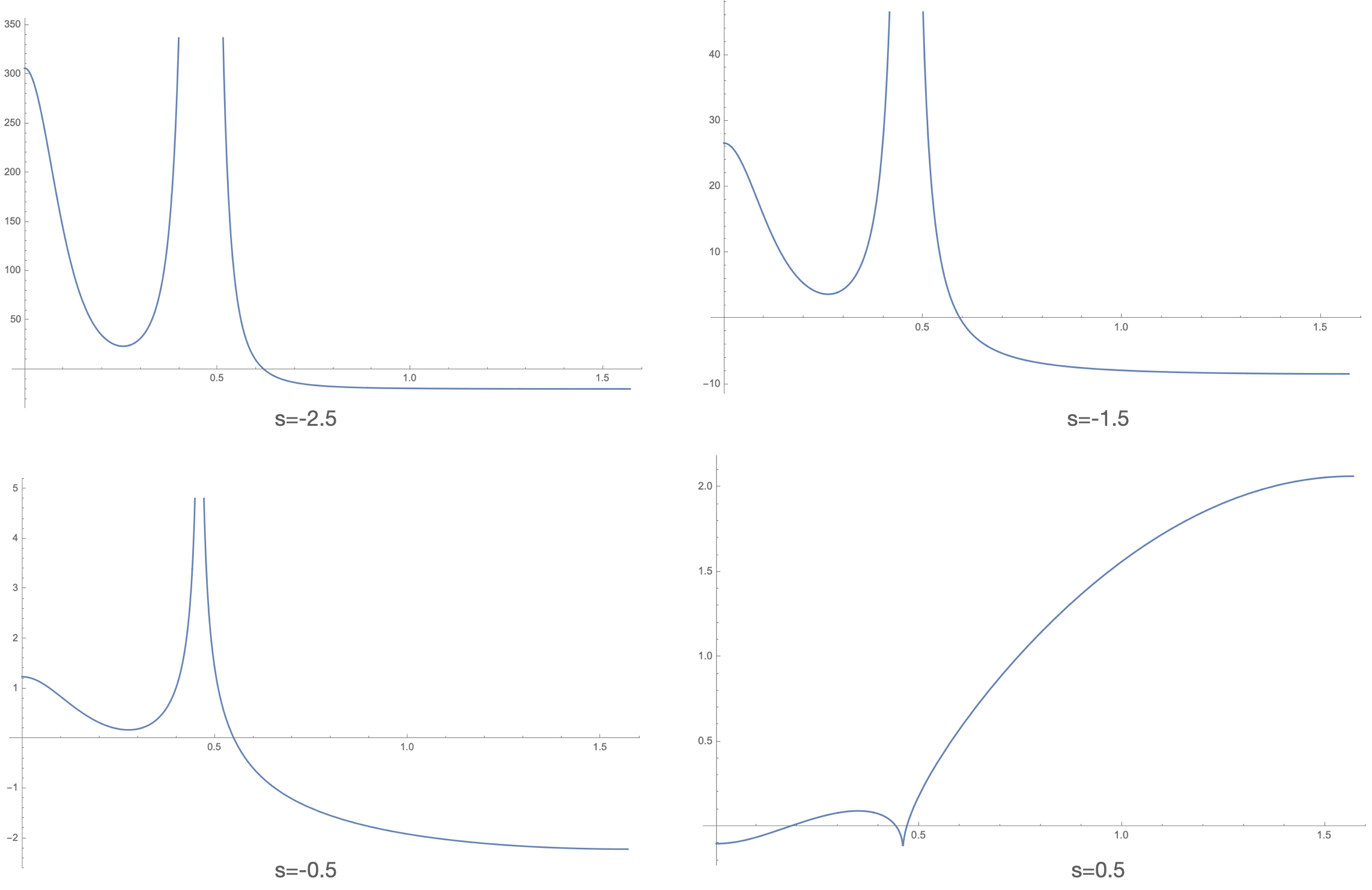}
	\caption{Comparison of Line (\ref{cex}) as a function of $0<t<\pi$ for self-adjoint matrices $A,B$ with $\lambda(A)=(-3,-5.5)$, $\lambda(B)=(3.4,-5.6)$.}    
\end{figure}

\s With Lemma \ref{2b2}, we can easily calculate the full range of possible eigenvalues $\lambda(A+B)$ and $\lambda(A-B)$ for $2\times 2$ self-adjoint matrices with pre-determined eigenvalues. It turns out that these eigenvalues will depend on $x_1$, $x_3$, and $|x_2|$, which now \textit{are} unique. Therefore, the full picture for Hanner-like inequalities for $2\times 2$ self-adjoint matrices can be determined by calculating the eigenvalues of  \begin{equation}
\begin{pmatrix} \lambda_1(A) & 0 \\ 0 & \lambda_2(A) \end{pmatrix} \pm \begin{pmatrix} \cos(t) & -\sin(t) 	\\ \sin(t) & \cos(t) \end{pmatrix} \begin{pmatrix} \lambda_1(B) & 0 \\ 0 & \lambda_2(B) \end{pmatrix}\begin{pmatrix} \cos(t) & \sin(t) 	\\ -\sin(t) & \cos(t) \end{pmatrix} 
\end{equation}
with $0\leq t\leq \pi$. Setting up \textit{Mathematica} code that allows for the manual manipulation of $\lambda(A)$ and $\lambda(B)$ then plotting \begin{equation}\label{cex}
||A+B_t||_s^s+||A-B_t||_s^s-(||\lambda(A)||_s+||\lambda(B)||_s)^s-\big|||\lambda(A)||_s-||\lambda(B)||_s\big|^s
\end{equation}
 reveals that, for example, the choice of $\lambda(A)=(-3,-5.5)$, $\lambda(B)=(3.4,-5.6)$ has the property that the sign of Line (\ref{cex}) depends on $t$. Figure (\ref{cexf}) shows this plot dependent on $t$ for various values of $s$.
\end{proof}

\section{Singular Value Rearrangement Inequalities}
\begin{proof}{Proof of Theorem \ref{svr1}}

\s Without loss of generality we assume that $C+D,C-D>0$; then a limiting argument extends to the positive semidefinite case. For a positive matrix $X$ with positive normalization constant $k_s$ we can use spectral calculus to write \begin{align}\label{sint}
X^s=\begin{cases}
c_s \int_0^\infty t^s \left(\frac{1}{t}-\frac{1}{t+X} \right)	dt & 0<s<1 \\
c_s \int_0^\infty t^s \left(\frac{1}{t+X} \right)	dt  	& -1<s <0 \\
c_s \int_0^\infty t^{\ubar{s}-s} \left(\frac{1}{t+X} \right)^{|\ubar{s}|}	dt  & s<-1
\end{cases}	
\end{align}
where $\ubar{s}$ and $\overline{s}$ will be used to indicate the floor and ceiling of $s$ respectively.

\s For $-1<s<0$ and $0<s<1$, we can make the standard substitution $H=C+t$, $K=H^{-1/2}DH^{-1/2}$ then using the fact that $(I\pm K)^{-1}$ can be written as a power series, to write \begin{align}
\frac{1}{t+C+D}+\frac{1}{t+C-D}&=H^{-1/2}\left(\sum_{k=0}^{\infty}(-1)^kK^{k} \right)H^{-1/2}+H^{-1/2}\left(\sum_{k=0}^{\infty}K^{k}\right)H^{-1/2} \\
&=H^{-1/2}\left(\sum_{k=0}^{\infty}K^{2k}\right)H^{-1/2}
\end{align}

\s To each term, we can apply the majorization identities Theorem \ref{sabweak} and Lemmas \ref{weakpower} and \ref{weaksum},  \begin{align}
\Tr[H^{-1/2}K^{2k}H^{-1/2}]&\leq \sum_{i=1}^n (\sigma_i(H^{-1})\sigma_i(K))^{2k} \\&\leq \sum_{i=1}^n \sigma_i(H^{-1})^{2k+1}\sigma_i(D)^{2k}\\&=\sum_{i=1}^n \sigma_{n+1-i}(C+t)^{2k+1}\sigma_i(D)^{2k}.
\end{align}

\s Combining each term back into the analogous expansion for $(t+\sigma_\uparrow(C)+\sigma_\downarrow(D))^{-1}+(t+\sigma_\uparrow(C)-\sigma_\downarrow(D))^{-1}$, we conclude that \begin{equation}
\Tr\left[\frac{1}{t+C+D}+\frac{1}{t+C-D} \right] \leq \Tr\left[\frac{1}{t+\sigma_\uparrow(C)+\sigma_\downarrow(D)}+\frac{1}{t+\sigma_\uparrow(C)-\sigma_\downarrow(D)} \right],
\end{equation}
and the theorem follows.

\s For the $s<-1$ case, we will return to the methodology of a term-by-term comparison of the derivatives of $F(r)=\Tr[(C+rD)^s+(C-rD)^s]$ and the Taylor representations of $||\sigma_\uparrow(C)+r\sigma_\downarrow(D)||^s+||\sigma_\uparrow(C)-r\sigma_\downarrow(D)||^s$ as introduced in the proof of Theorem \ref{RHFV}. Clearly, as $\sigma_n(C)\geq \sigma_1(D)$, the latter's Taylor series is convergent at $r=1$. It also has the property of only having nonzero even coefficients, which are non-negative. We calculate \begin{equation}
\frac{d^k}{dr^k}F(r)=(-1)^kc_s\int_0^\infty  t^{\ubar{s}-s} \Tr\left[\frac{1}{t+C+rD} \left(D\frac{1}{t+C+rD} \right)^{k+|\ubar{s}|}+(-1)^k\frac{1}{t+C-rD} \left(D\frac{1}{t+C-rD} \right)^{k+|\ubar{s}|}\right]	dt 
\end{equation}

\s We once more deduce the property that $\frac{d^k}{dr^k}F(r)\geq 0$ for all $k$, and that \begin{align}
\frac{d^k}{dr^k}F(r)\big|_{r=0}&=2c_s \int_0^\infty  t^{\ubar{s}-s} \Tr\left[\frac{1}{t+C} \left(D\frac{1}{t+C} \right)^{k+|\ubar{s}|}\right]	dt \\
&\leq 2c_s \int_0^\infty  t^{\ubar{s}-s} \sum_{i=1}^n\left(\sigma_i((C+t)^{-1})^{k+|\ubar{s}|+1}\sigma_i(D)^{k+|\ubar{s}|}\right)	dt\\
&= 2c_s \int_0^\infty  t^{\ubar{s}-s}\left(\sum_{i=1}^n \frac{\sigma_i(D)^{k+|\ubar{s}|}}{(\sigma_{n+1-i}(C)+t)^{k+|\ubar{s}|+1}} \right)	dt \label{rfinal}
\end{align}
where we can now see that Line (\ref{rfinal}) is the integral representation of the $2k$th Taylor coefficient of $||\sigma_\uparrow(C)+r\sigma_\downarrow(D)||^s+||\sigma_\uparrow(C)-r\sigma_\downarrow(D)||^s$. The same partial sums argument yields the desired inequality. 

\s When the conditions on $C$ and $D$ are not met, the method used to find counterexamples when the conditions of Thereom (\ref{tHR}) do not hold can also be used to produce counterexamples to the rearrangement inequality.
\end{proof}

\begin{proof}{Proof of Theorem \ref{svr2}}
	
\s It is proven in \cite{Carlen2006} that \begin{equation}
	\Tr\left[\frac{1}{t+C+D}+\frac{1}{t+C-D}\right]\geq \Tr\left[\frac{1}{t+\sigma(C)+\sigma(D)}+\frac{1}{t+\sigma(C)-\sigma(D)}\right]
\end{equation}
for all $t$. Therefore, a direct application of the integral representations for $0<s<1$ and $-1<s<0$ yield the desired inequality. For $s<-1$, we apply the same Taylor expansion method as in Theorem \ref{svr1}, now noting that \begin{align}
\Tr\left[\frac{1}{t+C} \left(D\frac{1}{t+C} \right)^{k'}\right]	 
& =\Tr\left[\frac{1}{t+C}^{k'/2k'} \left(\frac{1}{t+C}^{1/2}D\frac{1}{t+C}^{1/2} \right)^{k'}\frac{1}{t+C}^{k'/2k'}\right]	\\
& \geq\Tr\left[\left(\frac{1}{t+C}^{(k'+1)/2k'}D\frac{1}{t+C}^{(k'+1)/2k'} \right)^{k'}\right]\\
&=\sum_{i=1}^n\sigma\left(\frac{1}{t+C}^{(k'+1)/2k'}D^{1/2}  \right)^{2k'} \\
&\geq\sum_{i=1}^n\sigma_i\left(\frac{1}{t+C}^{(k'+1)/2k'}\right)^{2k'}\sigma_{n+1-i}\left(D^{1/2}  \right)^{2k'} \\
&=\sum_{i=1}^n\frac{\sigma_i(D)^{k'}}{(\sigma_i(C)+t)^{k'+1}}.
\end{align}

\s To see that the conditions on $C$ and $D$ are necessary, take $D$ unitary and $C>I$. Then $\sigma_\uparrow(D)=\sigma_\downarrow(D)$, and as Theorem \ref{svr2} holds. Examples can easily be found such that it holds strictly, so the inequality of Theorem \ref{svr1} is violated.
\end{proof}

\section{The Hanner Equality Cases For Matrices}

\begin{proof}{Proof of Theorem \ref{HE}}
We once again use a full term-by-term comparison of the Taylor series of  $\Tr[(C+rD)^p+(C-rD)^p]$ and $(||C||_p+r||D||_p)^p+(||C||_p+r||D||_p)^p$. Now we use the integral representation for a positive matrix $X$ and $1<p<2$, with positive normalization constant $c_p$, \begin{equation}\label{pm}
X^p=c_p\int_0^\infty\left(\frac{X}{t^2}+\frac{1}{t+X}-\frac{1}{t} \right)t^p dt.
\end{equation}

\s Once more, the Taylor series converges and the coefficients of $\Tr[(C+rD)^p+(C-rD)^p]$ can be directly calculated to only have nonzero even terms, and for said terms to here be greater than or equal to $ (p)_{2k}||C_{\Diag}||_p^{p-2k}||D||_p^{2k}$ which in turn is greater than or equal to  $(p)_{2k}||C||_p^{p-2k}||D||_p^{2k}$. 

\s To see that there is equality if and only if $|D|=kC$, the final noted inequality leverages that $||C||_p\geq ||C_{\Diag}||_p$ implies $||C||_p^{p-2k}\leq ||C_{\Diag}||_p^{p-2k}$. In fact, by as $|x|^p$ is strictly convex for $1<p<2$ we have by Theorem \ref{strictmaj} $||C||_p> ||C_{\Diag}||_p$ when $C\neq C_{\Diag}$. This immediately implies that there can only be equality when $C$ and $D$ commute. 

\s In the commuting case, \begin{align}	||C+D||_p^p+||C-D||_p^p&=||\lambda_i(C)+\lambda_{k_i}(D)||_p^p+||\lambda_i(C)-\lambda_{k_i}(D)||_p^p\\&=||\lambda_i(C)+|\lambda_{k_i}(D)|||_p^p+||\lambda_i(C)-|\lambda_{k_i}(D)|||_p^p
\end{align}
It was proven by Hanner originally in \cite{hanner1956}  that there is equality in the application of Hanner's inequality to sequences as above if and only if they are multiples of one another. Returning to the matrix expression, this gives the requirement on $D$ that $c|D|=C$. 

\s To establish the equality case for $C+D, C-D\geq 0$ when $p>2$, we must use the following dual representation: letting $q$ be the dual index of $p$, we see that \begin{equation}
(||C+D||_p^p+||C-D||_p^p)^{1/p}=\left|\left|\begin{pmatrix} C & D \\ D & C\end{pmatrix}\right|\right|_p=\Tr\left[\begin{pmatrix} C & D \\ D & C\end{pmatrix}\begin{pmatrix} X & Y \\ Y & X\end{pmatrix}  \right],
\end{equation}
where \begin{equation}
\left|\left|\begin{pmatrix} X & Y \\ Y & X\end{pmatrix}\right|\right|_q=1,
\end{equation}
and in fact is also positive semidefinite and commutes with $\begin{pmatrix} C & D \\ D & C\end{pmatrix}$. We therefore calculate \begin{align}
(||C+D||_p^p+||C-D||_p^p)^{1/p}&=2\Tr[CX+DY] \label{f}\\
&\leq 2(||C||_p||X||_q+||D||_p||Y||_q) \\
&=\Tr\left[\begin{pmatrix} ||C||_p & ||D||_p \\ ||D||_p & ||C||_p\end{pmatrix}\begin{pmatrix} ||X||_q & ||Y||_q \\ ||Y||_q & ||X||_q\end{pmatrix}  \right] \\
&\leq \left|\left|\begin{pmatrix} ||C||_p & ||D||_p \\ ||D||_p & ||C||_p\end{pmatrix}  \right|\right|_p  \left|\left| \begin{pmatrix} ||X||_q & ||Y||_q \\ ||Y||_q & ||X||_q\end{pmatrix} \right|\right|_q\label{sl} \\
&\leq \left|\left|\begin{pmatrix} ||C||_p & ||D||_p \\ ||D||_p & ||C||_p\end{pmatrix}  \right|\right|_p  \label{l}
\end{align}
where the inequality from Line (\ref{sl}) to Line (\ref{l}) leverages that $1\leq q\leq 2$, and hence \begin{equation}
1=||X+Y||_q^q+||X-Y||_q^q\geq (||X||_q+||Y||_q)^q+(||X||_q-||Y||_q)^q.
\end{equation}

\s For there to be equality in Hanner's Inequality, there must be equality in each of Lines (\ref{f}-\ref{l}). In particular, this means as we have just proven that $Y=c|X|$. We can change to a block diagonal basis to represent the original commutation was \begin{equation}
	\left[\begin{pmatrix} C + D & 0 \\0 &  C-D \end{pmatrix},  \begin{pmatrix} X + Y & 0 \\ 0&  X-Y\end{pmatrix}  \right]=0.
\end{equation}
or \begin{equation}
	[X+Y,C+D]=0, \qquad [X-Y,C-D]=0.	
\end{equation}	
When $X$ and $Y$ commute, then $[X+Y, X-Y]=0$, and therefore $[C+D,C-D]=0$. We write this out explicitly to see \begin{align}
(C+D)&(C-D)=(C-D)(C+D) \\
&\Rightarrow C^2+DC-CD-D^2=C^2-DC+CD-D^2 \\
& \Rightarrow [C,D]=-[D,C]=0. 	
\end{align}	
Then repeating the argument of the commuting case with Hanner's inequality for sequences before, there is once more equality if and only if  $c|D|=C$. 

\s For arbitrary $C,D$, we may first without loss of generality assume that $C,D$ are self-adjoint; otherwise we would apply the standard doubling technique noting the equality holds if and only if it holds for \begin{equation}
\widehat{C}=\begin{bmatrix} 0 & C \\ C^\ast & 0 \end{bmatrix}, \qquad 	\widehat{D}=\begin{bmatrix} 0 & D \\ C^\ast & 0 \end{bmatrix}.
\end{equation}	
Hanner's inequality is deduced in the $p\geq 4$ range using the fact that \begin{equation}
\left|\left|\begin{pmatrix} C & D \\ D & C\end{pmatrix}\right|\right|_p^2=
	\left|\left|\begin{pmatrix} C^2+D^2 & CD+DC \\ CD+DC & C^2+D^2\end{pmatrix}\right|\right|_{p/2},
\end{equation}
and taking the dual representation with a positive semidefinite and commuting block matrix.  As a Hermitian matrix and its square are mutually diagonalizable, we note that \begin{equation}
	\left[\begin{pmatrix} C & D \\ D & C\end{pmatrix}^2,  \begin{pmatrix} X & Y \\ Y & X\end{pmatrix}  \right]=0	\qquad \text{ iff } \qquad \left[\begin{pmatrix} C & D \\ D & C\end{pmatrix},  \begin{pmatrix} X & Y \\ Y & X\end{pmatrix}  \right]=0,
\end{equation}	
and therefore we can draw the same conclusion $[C,D]=0$ from $[X,Y]=0$. Again by Hanner's inequality on $\ell^p$, then $c|D|=|C|$. For the $1\leq p\leq \frac{4}{3}$, there can equality in Hanner's inequality for operators $u,v$ once more only if there is equality for the dual operators $\phi$ associated with $(u+v)$ and $\psi$ associated with $(u-v)$. These dual operators once more can be represented in this case by a commuting block matrix, and the commutation argument is repeated. 
\end{proof}

\s We note that although we do not formulate the equality case for $s<1$, that the same Taylor expansion argument holds using integral representations for $s<1$, so this theorem can also be applied to $X+Y, X-Y\geq 0$, $s<1, s\neq 0$.

\section*{Acknowledgements}
This research was funded by the NDSEG Fellowship, Class of 2017.  Thank you to my advisor, Professor Eric Carlen, for bringing my attention to the problem and providing me with a background to the subject. Thank you to Professor Jean-Christophe Bourin for discussion on the variational representations.

\bibliographystyle{spmpsci}   
\bibliography{references} 
\end{document}